\documentclass[useAMS,referee,usenatbib]{biom}

\textheight 9.5in

\renewcommand\arraystretch{0.8}
\parskip = 2mm


\usepackage{amsmath, amssymb, amsfonts, amsbsy, epsfig, graphicx,rotating, subfigure}
\usepackage[usenames, dvipsnames]{color}
\usepackage{enumerate}
\RequirePackage[colorlinks,citecolor=blue,urlcolor=red]{hyperref}
\usepackage{natbib}
\usepackage{ulem}
\usepackage{fancyhdr}
\usepackage{multirow,xspace,setspace,booktabs,subfigure}
\usepackage{tabularx}
\usepackage{tabulary}


\def\be{\begin{equation}}
\def\ee{\end{equation}}


\newcommand{\wh}{\widehat}

\def\de{\delta}

\def\la{\lambda}

\newcommand{\wt}{\widetilde}

\renewcommand{\P}{\mathbb{P}}

\newcommand{\s}{\sqrt}

\newcommand{\br}{\mathbb{R}}

\newcommand{\bA}{{\mathbf A}}

\newcommand{\bF}{{\mathbf F}}

\newcommand{\bR}{{\mathbf R}}

\newcommand{\bU}{{\mathbf U}}
\newcommand{\bV}{{\mathbf V}}
\newcommand{\bW}{{\mathbf W}}
\newcommand{\bX}{{\mathbf X}}
\newcommand{\bY}{{\mathbf Y}}
\newcommand{\bZ}{{\mathbf Z}}

\newcommand{\bbeta}  {\boldsymbol{\beta}}

\newcommand{\bSigma}{\boldsymbol{\Sigma}}

\newcommand{\bTheta} {\boldsymbol{\Theta}}

\newcommand{\bmu} {\boldsymbol{\mu}}

\newcommand{\bD}{{\mathbf D}}
\newcommand{\bzero}{{\mathbf 0}}

\newcommand{\lbl}{\label}
\newcommand{\beq}{\begin{eqnarray*}}
\newcommand{\eeq}{\end{eqnarray*}}
\newcommand{\beqn}{\begin{eqnarray}}
\newcommand{\eeqn}{\end{eqnarray}}
\newcommand{\eq}[1]{$(\ref{#1})$}
\newcommand{\rI}{\mathrm{\scriptscriptstyle (I)}}
\newcommand{\rII}{\mathrm{\scriptscriptstyle (II)}}
\newcommand{\rs}{{\rm s}}
\newcommand{\ns}{{\rm ns}}


\def\T{{ \mathrm{\scriptscriptstyle T} }}

\def\JRSSB{{\it Journal of the Royal Statistical Society, Series B}}

\def\BIN{{\it Bioinformatics}}
\def\SS{{\it Statistica Sinica}}

\def\AS{{\it The Annals of Statistics}}



\title[Testing High Dimensional Means]{Simulation-Based Hypothesis Testing of High Dimensional Means Under Covariance Heterogeneity}


\author{Jinyuan Chang$^{1,*}$\email{changjinyuan@swufe.edu.cn},
Chao Zheng$^{2,**}$\email{zhengc1@student.unimelb.edu.au}, Wen-Xin Zhou$^{3,***}$\email{wenxinz@princeton.edu}, and Wen Zhou$^{4,****}$\email{riczw@stat.colostate.edu}  \\
\small $^{1}$School of Statistics, Southwestern University of Finance and Economics, Chengdu, Sichuan 611130, China\\
\small $^{2}$School of Mathematics and Statistics, The University of Melbourne, Parkville, VIC 3010, Australia \\
\small $^{3}$Department of Operations Research and Financial Engineering, Princeton University, Princeton, NJ 08544, U.S.A.\\
\small $^{4}$Department of Statistics, Colorado State University, Fort Collins, CO 80523, U.S.A.
}

\begin{document}


\date{{\it Received December} 2016.  }



\pagerange{\pageref{firstpage}--\pageref{lastpage}}
\volume{}
\pubyear{2015}
\artmonth{September}


\doi{10.1111/j.1541-0420.2005.00454.x}


\label{firstpage}


\begin{abstract}
In this paper, we study the problem of testing the mean vectors of high dimensional data in both one-sample and two-sample cases. The proposed testing procedures employ maximum-type statistics and the parametric bootstrap techniques to compute the critical values. Different from the existing tests that heavily rely on the structural conditions on the unknown covariance matrices, the proposed tests allow general covariance structures of the data and therefore enjoy wide scope of applicability in practice. To enhance powers of the tests against sparse alternatives, we further propose two-step procedures with a preliminary feature screening step. Theoretical properties of the proposed tests are investigated. Through extensive numerical experiments on synthetic datasets and an human acute lymphoblastic leukemia gene expression dataset, we illustrate the performance of the new tests and how they may provide assistance on detecting disease-associated gene-sets. The proposed methods have been implemented in an R-package HDtest and are available on CRAN.
\end{abstract}

\begin{keywords}
 Feature screening; High dimension; Hypothesis testing;  Normal approximation; Parametric bootstrap; Sparsity.
\end{keywords}

\maketitle

\section{Introduction}   \lbl{sec.intro}


The problems of comparing a particular sample to a hypothetical population with known prior information or comparing two parallel groups, such as a control group and a treatment group, have both important applications in modern genomics and bio-medical research and become the foundation of scientific discoveries. They have been employed widely for identifying biologically interesting gene-sets for drug design, evolutionary studies, and mutation detection. Our interests in these problems are motivated by a microarray study on human acute lymphoblastic leukemia  \citep{C04}. This study consists of 75 patients of B-lymphocyte type leukemia, who were classified into two groups: 35 patients with BCR/ABL fusion and 40 patients with cytogenetically normal NEG. It is known that genes tend to work collectively
in groups to achieve certain biological tasks. Our analysis focuses on such groups of genes (gene sets) defined with the gene ontology (GO) framework, which are referred to as GO terms. Identifying disease-relevant GO terms based on their average expression levels provides information on differential gene pathways associated with the leukemia. Many GO terms contain a large number of (in the data, as many as 3,145) genes with very complex gene-wise dependence structures. The large dimension of data and the complex dependency among genes make the problem of comparing population means extremely challenging.

 Let $\bX$ and $\bY$ be two $p$-dimensional random vectors with means $\bmu_1=(\mu_{1 1}, \ldots, \mu_{1 p})^\T$ and $\bmu_2=(\mu_{2 1}, \ldots, \mu_{2p})^\T$, covariance matrices $\bSigma_1=(\sigma_{1, k \ell})_{1\leq k, \ell \leq p}$ and $\bSigma_2=(\sigma_{2, k \ell})_{1\leq k, \ell \leq p}$, respectively. It is then of general interest in testing the hypotheses
\begin{itemize}
\item (One-sample problem)
$
H^{\rI}_{0} : \bmu_{1}=\bmu_0$ versus $H^{\rI}_{ 1}:\bmu_1\neq\bmu_0
$
for a   specified $p$-dimensional vector $\bmu_0$, which, without loss of generality, is equivalent to
\begin{equation}
H^{\rI}_{0} : \bmu_{1}=\bzero ~~ {\rm versus} ~~ H^{\rI}_{ 1}:\bmu_1\neq\bzero; \label{eq:onesample}
\end{equation}
 \item (Two-sample problem)
\begin{equation}\label{eq:twosample}
H^{\rII}_0:\bmu_1=\bmu_2 ~~ {\rm versus} ~~ H^{\rII}_1:\bmu_1\neq\bmu_2.
\end{equation}
\end{itemize}
 When $p$ is fixed, traditional tests have been extensively studied for testing both \eqref{eq:onesample} and \eqref{eq:twosample}. For example, the properties for both the one-sample and two-sample Hotelling's $T^2$ tests have been examined under normality assumption \citep{Anderson_2003}. We refer to \cite{LiuShao_2013} for a moderate deviation result in the absence of normality.

Generally, the sum of squares-type and the maximum-type statistics are used to test the hypotheses \eq{eq:onesample} and \eq{eq:twosample} in the high dimensional settings. The sum of squares-type statistics aim to mimic the weighted Euclidean norms, $| \bA \bmu_1 |_2^2 $ or $|\bA (\bmu_1-\bmu_2) |_2^2$ for certain linear transformation $\bA$, and the corresponding tests are powerful for detecting relatively dense signals \citep{BaiSaranadasa_1996, ChenQin_2010}. Statistics of the maximum-type, on the other hand, are preferable for detecting relatively sparse signals \citep{CaiLiuXia_2014} and have been used in a variety of applications including the medical image problem \citep{JamesClymerSchmalbrock_2001} 
and gene selections \citep{Martens_2005}. 

Most existing testing procedures for \eq{eq:onesample} and \eq{eq:twosample} rely on the derivation the pivotal limiting distribution of test statistics, from which the critical value is approximated. In the high dimensional scenarios, various structural assumptions on the unknown covariance matrices have been imposed \citep{ZhongChenXu_2013,CaiLiuXia_2014}. 
However, in many applications, these assumptions can be very restrictive or difficult to be verified, and therefore limit the scope of applicability for the limiting distribution calibration approach. First, the existence of a pivotal asymptotic distribution relies heavily on the structural assumptions on the unknown covariance/correlation structures, which may not be true in practice. For example, it is very common that the expression levels are highly correlated for genes regulated by the same pathway \citep{WM2012} or associated with the same functionality \citep{K2014}, which results in a complex and non-sparse covariance structure. These empirical evidences indicate that the strong structural assumptions on the covariance matrices may sometimes be unrealistic in real-world applications. Another concern, as pointed out by \cite{CaiLiuXia_2014}, is that the convergence rate to the extreme value distribution of maximum-type statistics is usually slow. Taking the extreme distribution of type I as an example, the convergence rate is of order $O\{  \log(\log n)/ \log(n) \}$. Although the convergence rate may be improved by using suitable intermediate approximations, still its validity relies on the dependence structure of the underlying distribution.

Driven by the above two concerns, we revisit the problem of testing hypotheses \eq{eq:onesample} and \eq{eq:twosample} from a different perspective. Motivated by applications in genomic analysis and image analysis, we are particularly interested in detecting discrepancies when $\bmu_1$ and $\bzero$ or $\bmu_2$ are distinguishable to a certain extent in at least one coordinate. We develop a fully data driven procedure to compute the critical values using the Monte Carlo simulations. The validity of our procedure is established without enforcing structural assumptions of any kind on the unknown covariances. The main idea is based on the approximation of empirical processes by Gaussian processes \citep{ChernozhukovChetverikovKato_2013}, and to some degree, is similar to that of \cite{LiuShao_2013} that utilizes the intermediate approximation. However, instead of generating independent standard multivariate normal vectors, our approach takes into account correlations among the features and therefore is automatically adapted to the underlying dependence. 


The rest of the paper is organized as follows. In Section~\ref{method.sec}, we describe the simulation-based testing procedures for both hypotheses \eqref{eq:onesample} and \eqref{eq:twosample}. Theoretical properties of the tests are studied in Section~\ref{section:theory}. Numerical studies are reported in Section \ref{section:numerical} to assess the performance of the proposed tests comparing to the peer methods. In Section \ref{real data}, we applied the proposed tests to the acute lymphoblastic leukemia data for identifying disease-associated gene-sets based on the gene expression levels. The underpinning technical details, as well as additional simulation results and empirical data analysis, are relegated to the supplementary material.

\setcounter{equation}{0}
\section{Methodology}
\label{method.sec}

Throughout the paper, we denote by $|\bbeta|_\infty=\max_{1\leq k\leq p}|\beta_k|$ for a $p$-dimensional vector $\bbeta=(\beta_1,\ldots,\beta_p)^\T$. For a matrix $\bA=(a_{k \ell})_{p\times p}$, define $|\bA|_\infty=\max_{1\leq k , \ell \leq p}|a_{k  \ell}|$. 
Let $\bD_1=\mbox{diag}\,(\bSigma_1)$ and $\bD_2=\mbox{diag}\,(\bSigma_2)$. Denote by $\bR_1$ and $\bR_2$ the corresponding correlation matrices. Let $\mathcal{X}_n=\{\bX_1, \ldots , \bX_{n} \}$ and $\mathcal{Y}_m=\{\bY_1, \ldots, \bY_{m}\}$ be two independent samples consisting of independent and identically distributed (i.i.d.) observations drawn from the distributions of $\bX$ and $\bY$, respectively. Let $N=n+m$. For each $i=1,\ldots,n$ and $j=1,\ldots,m$, write $\bX_i=(X_{i1},\ldots,X_{ip})^\T$ and $\bY_j=(Y_{j1},\ldots,Y_{jp})^\T$.

\subsection{Test procedures}

\subsubsection{One-sample case}
\label{section:onesample}
Consider the maximum-type statistics in the following forms:
\be
 T_{\ns}^{\rI}=\max_{1\leq k\leq p} \s{n} |\bar{X}_k| \qquad \textrm{or}  \qquad
 T^{\rI}_{\rs}=\max_{1\leq k\leq p}\frac{\s{n}|\bar{X}_k|}{\hat{\sigma}_{1 k}},   \label{T12}
\ee where $\bar{X}_k=n^{-1}\sum_{i=1}^{n}X_{i k}$ and $\hat{\sigma}_{1 k}^2=n^{-1}\sum_{i=1}^{n}(X_{i k}-\bar{X}_k)^2$. Throughout, the statistic $T^{\rI}_{\rs}$ is referred as the {\it studentized} statistic, while $T_{\ns}^{\rII}$ is referred as the {\it non-studentized} statistic. Intuitively, large values of $T^{\rI}_{\ns}$ or $T^{\rI}_{\rs}$ provide evidences against $H^{\rI}_0$ in \eqref{eq:onesample} so that the corresponding tests are of the form $
    \Psi_{\ns,\alpha}^{\rI}=I\{T_{\ns}^{\rI} >  {\rm cv}_{\ns, \alpha}^{\rI}\}$ or $\Psi_{\rs,\alpha}^{\rI}=I\{T_{\rs}^{\rI} >  {\rm cv}_{\rs , \alpha}^{\rI}\} ,
$ where ${\rm cv}_{\ns, \alpha}^{\rI} $ and ${\rm cv}_{\rs , \alpha}^{\rI} $ are the critical values.

Under the null hypothesis $H^{\rI}_0:\bmu_1=\bzero$, we motivate from the multivariate central limit theorem with fixed $p$ to calculate critical values ${\rm cv}_{\ns,\alpha}^{\rI}$ and ${\rm cv}_{\rs ,\alpha}^{\rI}$ as follows:
let $\wt{\bSigma}_1$ be an estimate of $\bSigma_1$ from the sample $\mathcal{X}_n$, and set $\wt{\bR}_1= \wt{\bD}_1^{-1/2}\wt{\bSigma}_1 \wt{\bD}_1^{-1/2}$ with $\wt{\bD}_1=\textrm{diag}\,(\wt{\bSigma}_1)$. Given $\mathcal{X}_n$, let $\bW^{\rI}_{\ns} \sim \text{N}(\bzero,\wt{\bSigma}_1)$ and $\bW^{\rI}_{\rs} \sim \text{N}(\bzero, \wt{\bR}_1)$ be two Gaussian random vectors, the critical values  can be computed by
$
 {\rm cv}_{\ns,\alpha}^{\rI}  =~ \inf\{  t\in \br: \mathbb{P} (  |\bW^{\rI}_{\ns}|_\infty >t \,|   \mathcal{X}_n ) \leq \alpha\}$ and $ 
  {\rm cv}_{\rs ,\alpha}^{\rI}   =~ \inf\{ t\in \br: \mathbb{P} (  |\bW^{\rI}_{\rs} |_\infty >t \, | \mathcal{X}_n ) \leq \alpha \}.  
$
Practically, let $ \{\bW_{\ns,\ell}\}_{\ell=1}^M\overset{\rm i.i.d.}{\sim}\text{N}(\bzero,\wt{\bSigma}_1)$ and $\{\bW_{{\rs},\ell}\}_{\ell=1}^M\overset{\rm i.i.d.}{\sim}\text{N}(\bzero, \wt{\bR}_1)$. Then, ${\rm cv}_{\ns,\alpha}^{\rI}$ and ${\rm cv}_{\rs ,\alpha}^{\rI}$ can be estimated by
$
\widehat{{\rm cv}}_{\ns,\alpha}^{\, \rI}  = \inf \{  t\in \br:   \wh{F}_{\ns,M}^{\rI}(t) \geq 1- \alpha \}$ and $
 \widehat{{\rm cv}}_{\rs ,\alpha}^{\,\rI} = \inf\{  t\in \br:   \wh{F}_{{\rs},M}^{\rI}(t) \geq 1- \alpha \},
$
where $\wh{F}_{\ns,M}^{\rI}(t) =  M^{-1} \sum_{\ell=1}^M I\{|\bW_{\ns,\ell}|_\infty \leq t \} $ and $\wh{F}_{{\rs},M}^{\rI}(t) = M^{-1}\sum_{\ell=1}^M I\{|\bW_{\rs ,\ell}|_\infty \leq t \} $.
For $\nu\in\{\ns ,  {\rs}\}$, the empirical version of test $\Psi_{\nu,\alpha}^{\rI}$ is therefore defined by \be
     \wh \Psi_{\nu, \alpha}^{\rI}(M) = I\{ T_{\nu}^{\rI} > \widehat{{\rm cv}}_{\nu, \alpha}^{\,\rI} \},  \label{test.12}
\ee such that the null hypothesis $H^{\rI}_0$ is rejected whenever $\wh \Psi^{\rI}_{\nu, \alpha}(M) =1 $. The proposed testing procedures are fully data driven and easily computed.
In Section \ref{section:matrixest}, we discuss the constructions of $\wt{\bSigma}_1$, from which the wide applicability of the test \eqref{test.12} will be explored. 
\subsubsection{Two-sample case}
\label{2s.test}

The above procedures can be naturally extended to deal with the two-sample problem \eqref{eq:twosample}. Analogously to \eq{T12}, we define the non-studentized and studentized test statistics by
$
 T_{\ns}^{\rII}=\max_{1\leq k\leq p}  \s{nm} |\bar{X}_k-\bar{Y}_k|/\sqrt{n+m} $ and $T_{\rs}^{\rII} = \max_{1\leq k\leq p}   \s{nm}  |\bar{X}_k-\bar{Y}_k|/(m \hat{\sigma}_{1 k}^2   + n \hat{\sigma}_{2 k}^2)^{1/2}$
respectively, where $\bar{X}_k=n^{-1}\sum_{i=1}^{n}X_{i k}$, $\bar{Y}_k=m^{-1}\sum_{j=1}^{m}Y_{j k}$, $\hat{\sigma}_{1 k}^2=n^{-1}\sum_{i=1}^{n}(X_{i k}-\bar{X}_k)^2$, and $ \hat{\sigma}_{2 k}^2=m^{-1} \sum_{j=1}^{m}(Y_{j k}-\bar{Y}_k)^2.$ 
For nominal significance level $\alpha$, we define tests of the form
$
    \Psi_{\ns,\alpha}^{\rII}=I\{T_{\ns}^{\rII} > {\rm cv}_{\ns,\alpha}^{\rII}\}$ or $\Psi_{\rs,\alpha}^{\rII}=I\{T_{\rs}^{\rII} > {\rm cv}_{\rs ,\alpha}^{\rII}\}$
    with appropriate critical values ${\rm cv}_{\ns,\alpha}^{\rII}$ and ${\rm cv}_{\rs ,\alpha}^{\rII}$. Let $\wt{\bSigma}_1$ and $\wt{\bSigma}_2$ be estimates of $\bSigma_1$ and $\bSigma_2$, respectively. Define
\be
    \wt{\bSigma}_{1,2} = \frac{m}{N}\wt{\bSigma}_1+ \frac{n}{N} \wt{\bSigma}_2,  \quad   \wt{\bD}_{1,2}=\textrm{diag}\,\big( \wt{\bSigma}_{1,2}\big), \quad  \wt{\bR}_{1,2} = \wt{\bD}_{1,2}^{-1/2} \wt{\bSigma}_{1,2} \wt{\bD}_{1,2}^{-1/2}, \label{Sigma12} \ee
and let $\{\bW_{\ns,\ell}\}_{\ell=1}^M\overset{\rm i.i.d.}{\sim}\text{N}(\bzero, \wt{\bSigma}_{1,2} )$ and $\{\bW_{{\rs},\ell}\}_{\ell=1}^M\overset{\rm i.i.d.}{\sim}\text{N}(\bzero, \wt{\bR}_{1,2}  )$. Then, ${\rm cv}_{\ns,\alpha}^{\rII}$ and ${\rm cv}_{\rs ,\alpha}^{\rII}$ can be estimated by
$
  \widehat{{\rm cv}}_{\ns,\alpha}^{\,\rII} =  \inf \{  t\in \br: \wh{F}_{\ns,M}^{\rII}(t) \geq 1- \alpha \}$ and $  \widehat{{\rm cv}}_{\rs ,\alpha}^{\, \rII}  =  \inf \{  t\in \br:   \wh{F}_{{\rs},M}^{\rII}(t) \geq 1- \alpha  \},
$
where $\wh{F}_{\ns,M}^{\rII}(t) =  M^{-1} \sum_{\ell=1}^M I\{|\bW_{\ns,\ell}|_\infty \leq t \} $ and $\wh{F}_{{\rs},M}^{\rII}(t) = M^{-1} \sum_{\ell=1}^M I\{|\bW_{\rs ,\ell}|_\infty \leq t \} $. Similarly to \eq{test.12}, for $\nu \in\{\ns, {\rs}\}$, we define the empirical version of $\Psi^{\rII}_{\nu, \alpha}$ by $ \wh \Psi^{\rII}_{\nu, \alpha}(M)= I \{ T^{\rII}_{\nu} > \widehat{{\rm cv}}_{\nu, \alpha}^{\,\rII} \}$, such that the null hypothesis $H^{\rII}_0$ is rejected as long as $\wh \Psi^{\rII}_{\nu, \alpha}(M)=1$.

\subsection{Estimation of covariance matrices}
\label{section:matrixest}

As a part of proposed tests, we need estimates of the covariance matrices. Many existing tests rely on the operator-norm consistent estimation of the covariance matrices that requires extra structural assumptions on the unknown covariances such as banding or sparsity. 
In contrast, the proposed tests require much less restrictions on covariance estimates, which grants its wide scope of applicability. In fact, the validity of the proposed testing procedures only entails the covariance estimators $\wt{\bSigma}_1$ and $\wt{\bSigma}_2$ to satisfy $|\wt{\bSigma}_1-\bSigma_1|_\infty=o_P(1)$ and $|\wt{\bSigma}_2-\bSigma_2|_\infty=o_P(1)$. 


It is shown in Lemma 3 in the supplementary material that for the sample covariance and correlation matrices $\widehat{\bSigma}_q$ and $\wh{\bR}_q$ with $q=1,2$, there holds $
|\widehat{\bSigma}_q-\bSigma_q|_\infty + |\widehat{\bR}_q-\bR_q|_\infty  =o_P(1)$ under mild regularity conditions for $\log(p) = o( n^{ \gamma/2 })$ with $0<\gamma\leq 2$.
Therefore, the sample covariance and correlation matrices can be directly used in the proposed tests, while the dimension $p$ is allowed to be as large as either $O\{ \exp(n^{c_1}) \}$ for some $c_1>0$. 
In comparison to the existing tests,  we do not enforce any structural assumptions on the unknown covariance matrices $\bSigma_1$ and $\bSigma_2$. This reflects our motivations in Section \ref{sec.intro}. As evidenced by extensive numerical studies in Section \ref{section:numerical}, our proposed procedures are fairly robust to various covariance structures with complex forms, even the long range dependence.
Although the proposed tests do not require operator-norm consistent estimates of $\bSigma_1$ and $\bSigma_2$, still one may replace the sample covariance matrix by adaptive and rate-optimal covariance estimators to improve the empirical performance when the underlying covariance satisfies certain structural assumptions. 


\subsection{Screening-based testing procedures}
\label{sec:ps}

The proposed testing procedures are valid when the dimension $p$ is much larger than the sample size $n$. However, building tests based on all dimensions may result in large critical values which may compromise the power performance. To enhance the power, we propose a two-step procedure that combines the proposed simulation-based tests and a preliminary step on {\it feature screening}, which screens the $p$ measurements before conducting the test. The power of this two-step procedure is expected to improve upon the proposed tests with a large number of irrelevant features excluded. 



\subsubsection{One-sample case}
\label{ps:sec1}
Let $\mathcal{S}_{10}=\{1\leq k\leq p:\mu_{1 k}=0\}$. The preliminary procedure is aimed at eliminating irrelevant features indexed by $\mathcal{S}_{10}$. Reformulate the original global test of a mean vector to the following $p$ marginal tests:
$
H^{\rI}_{0k}:\mu_{1 k}=0 $ versus $H^{\rI}_{1k}:\mu_{1 k}\neq 0,
$
for $k=1, \ldots, p$. For the $k$th marginal hypothesis, a standard test statistic is the $t$-statistic $
\textrm{TS}_k^{\rI}={ \s{n} |\bar{X}_k|}/{\hat{\sigma}_{1 k}}.$
Motivated by the idea of marginal screening \citep{ChangTangWu_2013,ChangTangWu_2016}, we define the index set
$
\widehat{\mathcal{S}}_{1 } =\{ 1\leq k\leq p:  \textrm{TS}_k^{\rI} \leq  \sqrt{2\log (p)}+\{2\log (p)\}^{-1/2}+\sqrt{2\log(1/\alpha)}\}.
$
We refer to \cite{ChangTangWu_2013,ChangTangWu_2016} for more discussions on the advantages of the studenized statistics in marginal screening problems. If $|\widehat{\mathcal{S}}_1|<p$, we put $d=p-|\widehat{\mathcal{S}}_{1 }|$ and let $\wt{\bmu}_1 \in \br^d$ be the sub-vector of $\bmu_1 \in \br^p$ containing only the coordinates excluded by $\widehat{\mathcal{S}}_{1}$. We have therefore downsized the original problem and instead, we focus on the reduced null hypothesis $\wt{H}^{\rI}_0: \wt{\bmu}_1=\bzero$ against the alternative $\wt{H}_1^{\rI}: \wt{\bmu}_1\neq\bzero$. Write $\widehat{T}_{\textrm{ns}}^{(\textrm{I})}=\max_{k\notin\widehat{\mathcal{S}}_1}\sqrt{n}|\bar{X}_k|$ and $\widehat{T}_{\textrm{s}}^{(\textrm{I})}=\max_{k\notin\widehat{\mathcal{S}}_1}\sqrt{n}|\bar{X}_k|/\hat{\sigma}_{1k}$. The resulting non-studentized and studentized tests are given by
$\Psi_{\ns,\alpha}^{f,\rI} = I\{ \widehat{T}_{\textrm{ns}}^{(\textrm{I})} > {\rm cv}^{\rI}_{\ns, \alpha}(\widehat{\mathcal{S}}_{1}) \}$ and $\Psi^{f,\rI}_{\rs,\alpha}=I\{ \widehat{T}_{\textrm{s}}^{(\textrm{I})} > {\rm cv}^{\rI}_{\rs, \alpha}(\widehat{\mathcal{S}}_{1})  \}$,
where ${\rm cv}^{\rI}_{\ns, \alpha}(\widehat{\mathcal{S}}_{1 })$ and ${\rm cv}^{\rI}_{\rs, \alpha}(\widehat{\mathcal{S}}_{1 })$ denote the conditional $(1-\alpha)$-quantile of $\max_{k\notin \widehat{\mathcal{S}}_{1 }} | W_{\ns, k}^{\rI}|$ and $\max_{k\notin \widehat{\mathcal{S}}_{1 }} | W_{\rs, k}^{\rI}|$ given $\mathcal{X}_n$, respectively, with $\bW^{\rI}_{\ns}=(W^{\rI}_{\ns,1}, \ldots, W^{\rI}_{\ns,p})^\T$ and $\bW^{\rI}_{\rs}=(W^{\rI}_{\rs,1}, \ldots, W^{\rI}_{\rs,p})^\T$ as discussed in Section \ref{section:onesample}. Whenever $|\widehat{\mathcal{S}}_1|=p$, we set $\Psi_{\ns,\alpha}^{f,\rI} = \Psi_{\rs,\alpha}^{f,\rI}=0$.

Notice that $\mathbb{P}_{H_0^{(\textrm{I})}}\{\Psi_{\nu,\alpha}^{f,(\textrm{I})}=1\}\leq\mathbb{P}_{H_0^{(\textrm{I})}}[\Psi_{\nu,\alpha}^{f,(\textrm{I})}=1,\widehat{\mathcal{S}}_1=\{1,\ldots,p\}]+\mathbb{P}_{H_0^{(\textrm{I})}}[\widehat{\mathcal{S}}_1\neq \{1,\ldots,p\}]$ for $\nu\in\{\textrm{ns},\textrm{s}\}$. Since $\Psi_{\nu,\alpha}^{f,(\textrm{I})}=0$ if $|\widehat{\mathcal{S}}|=p$, then $\mathbb{P}_{H_0^{(\textrm{I})}}\{\Psi_{\nu,\alpha}^{f,(\textrm{I})}=1\}\leq\mathbb{P}_{H_0^{(\textrm{I})}}[\widehat{\mathcal{S}}_1\neq \{1,\ldots,p\}]$. As shown in part D of supplementary material, $\lim\sup_{n\rightarrow\infty}\mathbb{P}_{H_0^{(\textrm{I})}}[\widehat{\mathcal{S}}_1\neq \{1,\ldots,p\}]\leq \alpha$, which indicates that the size of the two-step procedure can be controlled by the prescribed significant level $\alpha$. On the other hand, also stated in part D of supplementary material, $\mathbb{P}_{H_1^{(\textrm{I})}}\{\widehat{T}_{\nu}^{(\textrm{I})}=T_{\nu}^{(\textrm{I})}\}\rightarrow1$ for $\nu\in\{\textrm{ns},\textrm{s}\}$ which means the testing statistics with screening and without screening are almost identical under $H_1^{(\textrm{I})}$. Since the critical value $\textrm{cv}_{\nu,\alpha}^{(\textrm{I})}(\widehat{\mathcal{S}}_1)$ for two-step procedure is not larger than $\textrm{cv}_{\nu,\alpha}^{(\textrm{I})}$ for non-screening procedure, we know with probability approaching to one that the power for two-step procedure does not decrease in comparison to the procedure without screening. The simulation studies in Section \ref{section:numerical} also verify this.

\subsubsection{Two-sample case}
\label{ps:sec2}
Similar to the one-sample case, for each $k=1,\ldots,p$, we define $\textrm{TS}_k^{\rII}=  \sqrt{nm}|\bar{X}_k-\bar{Y}_k|/ \big( m\hat{\sigma}_{1 k}^2  + n\hat{\sigma}_{2 k}^2 \big)^{1/2}$ and set
$
\widehat{\mathcal{S}}_{2 }=\{1\leq k\leq p: \textrm{TS}_k^{\rII}  \leq   [\sqrt{2\log (p)}+ \{2\log(p) \}^{-1/2}+\sqrt{2\log(1/\alpha)}\}.  $
If $|\widehat{\mathcal{S}}_2|<p$, the resulting tests, denoted by $\Psi_{\ns, \alpha}^{f,\rII}$ and $\Psi_{\rs, \alpha}^{f,\rII}$, are defined in the same way as $\Psi_{\ns, \alpha}^{f,\rI}$ and $\Psi_{\rs, \alpha}^{f,\rI}$ for one-sample case respectively. If $|\widehat{\mathcal{S}}_2|=p$, we set $\Psi_{\ns,\alpha}^{f,\rII} = \Psi_{\rs,\alpha}^{f,\rII}=0$.

\setcounter{equation}{0}
\section{Theoretical properties}\label{section:theory}

In this section, we study the properties of the proposed tests including the asymptotic sizes and powers.
In practice, taking $M$ in thousands using numerical devices to increase simulation efficiency is now the rule rather than the exception in the Monte Carlo framework. The difference between such large values of $M$ and using mathematically ideal value $M=\infty$ is particularly small. We therefore focus on the oracle tests $\Psi_{\nu,\alpha}^{\rI}$ and $\Psi_{\nu,\alpha}^{\rII}$ for $\nu\in\{\ns,\rs\}$, and their screening-based analogues $\Psi_{\nu,\alpha}^{f,\rI}$ and $\Psi_{\nu,\alpha}^{f,\rII}$. It is shown that the proposed tests maintain the nominal size asymptotically under very general covariance structures. Moreover, the proposed tests are shown to be consistent against sparse alternatives. Recall $\bSigma_1=(\sigma_{1,k\ell})_{1\leq k,\ell\leq p}$, $\bSigma_2=(\sigma_{2,k\ell})_{1\leq k,\ell\leq p}$, $\bD_1=\mbox{diag}\,(\bSigma_1)$ and $\bD_2=\mbox{diag}\,(\bSigma_2)$. The marginally standardized version of $\bX$ and $\bY$ are $\bU=(U_{1},\ldots, U_p)^\T=\bD_1^{-1/2}\bX$ and $\bV=(V_1,\ldots, V_p)^\T=\bD_2^{-1/2}\bY$, respectively.
We only impose the following mild moment conditions.  \\
\noindent
\textbf{(M1)} $\max_{1\leq k\leq p}  \max[  \{\mathbb{E} (|U_k|^r  )\}^{1/r} ,  \{\mathbb{E} (|V_k|^r )\}^{1/r}  ] \leq K_0$ for some $r \geq 4$ and $K_0>0$\\
\noindent
\textbf{(M2)} $\max_{1\leq k\leq p} \max[ \mathbb{E}  \{\exp(K_1|U_k|^{\gamma} )\} , \mathbb{E}  \{\exp(K_1 |V_k|^{\gamma} )\} ] \leq K_2$ for some $K_1>0$, $K_2>1$ and $0<\gamma\leq 2$.

Condition~(M1) indicates that the tail probability $\P(|U_k|  > t )$ decays to zero in a faster rate than $t^{-r}$ as $t\to \infty$. Condition~(M2) requires exponentially light tails, i.e., $\P(|U_k|  > t ) \leq \exp(- \wt K_1 t^\gamma)$ for some $\wt K_1>0$ and all sufficiently large $t$, and implies that all moments of $U_k$ are finite. Throughout this section, we assume that $\sigma_{1,11},\ldots,\sigma_{1,pp},\sigma_{2,11},\ldots,\sigma_{2,pp}$ are uniformly bounded away from $0$ and $\infty$, $n, p\geq 2$, $n \asymp m$ and $n\leq m$.

\begin{theorem}  \label{one-sample.size}
Let $\wt{\bSigma}_1=\wh{\bSigma}_1$, the sample covariance matrix, and $\nu \in \{\ns, \rs\}$. As $n ,p  \rightarrow \infty$,
$\mathbb{P}_{H^{\rI}_0} \{  \Psi^{\rI}_{\nu,\alpha} =1 \} \rightarrow \alpha
$ holds with either {\rm(i)} {\rm(M1)} holds and $p=O(n^{r/2-1-\de})$ for some $\de>0$; or {\rm(ii)} {\rm(M2)} holds for some $\gamma\geq 1/2$ and $\log (p)=o(n^{1/7})$.
\end{theorem}

Theorem \ref{one-sample.size} establishes the validity of the proposed one-sample tests in the sense that the testing procedures in Section \ref{section:onesample} maintain nominal significance level asymptotically. In addition, as evidenced by the numerical experiments in Section~\ref{section:numerical}, the test based on non-studentized statistics outperforms its studentized analogue in terms of maintaining the nominal significance level when the sample size is small. This, however, is not surprising since the inverse operation, say $\wh{\bD}_1^{-1/2}$, usually leads to an augmentation of the estimation error in $\wh{\bD}_1$ and therefore is more sensitive to the sample size. In the following theorem, we summarize the asymptotic power of the proposed one-sample tests under suitable conditions on the lower bound of the signal-to-noise ratios.

\begin{theorem}  \label{one-sample.power.1}
Let $\wt{\bSigma}_1= \wh{\bSigma}_1$ be the sample covariance matrix. Assume that either condition {\rm(M1)} holds and $p=O(n^{r/2-1-\de})$ for some $\de>0$, or condition {\rm(M2)} holds and $\log(p) = o (n^{\gamma/2} )$. For given $0<\alpha<1$, write $\la(p,\alpha)=  \s{2\log(p)} + \s{2\log(1/\alpha)}$, and let $\{ \varepsilon_{n} \}_{n\geq 1}$ be an arbitrary sequence of positive numbers satisfying $\varepsilon_{n} \rightarrow 0$ and $\varepsilon_{n} \s{\log(p)} \rightarrow \infty$ as $n \rightarrow \infty$. As ${n} , p \rightarrow \infty$, we have
{\rm(i)} $
   \mathbb{P}_{H^{\rI}_1} \{  \Psi^{\rI}_{\ns ,\alpha} =1  \} \rightarrow 1
$
if
$
   {\max_{1\leq k\leq p} |\mu_{1 k}|}/{\max_{1\leq k\leq p} \sigma_{1 k} }   \geq (1+\varepsilon_{n})
    n^{-1/2} \la(p,\alpha),
$ and {\rm (ii)} $\mathbb{P}_{H^{\rI}_1} \{ \Psi^{\rI}_{\rs ,\alpha} =1  \} \rightarrow 1$
if $\max_{1\leq k\leq p}  {|\mu_{1 k} |}/{ \sigma_{1 k} }  \geq (1+\varepsilon_{n}) n^{-1/2} \la(p,\alpha)$.
\end{theorem}

Theorem \ref{one-sample.power.1} reveals that the test based on studentized statistics is consistent in a larger testable region in comparison to the test based on non-studentized statistics. As a complement to Theorem~\ref{one-sample.size}, the asymptotic size of the proposed two-sample tests without screening is reported below.

\begin{theorem}   \label{two-sample.size}
Let $(\wt{\bSigma}_1, \wt{\bSigma}_2)= (\wh{\bSigma}_1, \wh{\bSigma}_2)$ and $\nu \in \{\ns, \rs\}$. Assume that either condition~{\rm(i)} or condition~{\rm(ii)} in Theorem~{\rm\ref{one-sample.size}} holds. Then as $n, p \rightarrow \infty$,
$
 \mathbb{P}_{H^{\rII}_0} \{  \Psi_{\nu,\alpha}^{\rII} =1 \} \rightarrow   \alpha.
$
\end{theorem}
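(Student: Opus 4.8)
The plan is to mirror the argument behind Theorem~\ref{one-sample.size}: rewrite $T^{\rII}_{\nu}$ as the maximum of a centered high-dimensional sum, apply the Gaussian approximation and comparison results of \cite{ChernozhukovChetverikovKato_2013}, and then convert distributional closeness into quantile closeness via Gaussian anti-concentration. First I would note that under $H^{\rII}_0:\bmu_1=\bmu_2$, writing $Z_k=\s{nm/N}\,(\bar X_k-\bar Y_k)$ and $\bZ=(Z_1,\dots,Z_p)'$, the means cancel and
$$
Z_k=\sum_{i=1}^n\s{m/(nN)}\,(X_{ik}-\mu_{1k})-\sum_{j=1}^m\s{n/(mN)}\,(Y_{jk}-\mu_{2k})
$$
is a sum of $N$ independent, non-identically-distributed, mean-zero summands of order $n^{-1/2}$, whose coordinatewise covariance matrix is exactly $\bSigma_{1,2}:=\frac{m}{N}\bSigma_1+\frac{n}{N}\bSigma_2$. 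Since $n\asymp m$ and (M1) (resp.\ (M2)) holds for both $\bU$ and $\bV$, these summands obey precisely the Lindeberg-type moment bounds the high-dimensional central limit theorem requires; applying it to the $2p$-vector $(\bZ,-\bZ)'$ to accommodate the absolute value yields $\sup_{t\in\br}|\p_{H^{\rII}_0}(T^{\rII}_{\ns}\le t)-\p(|\bG_{\ns}|_\infty\le t)|\to0$ with $\bG_{\ns}\sim\text{N}(\bzero,\bSigma_{1,2})$. The hybrid scaling and the presence of two samples change none of the steps already used for the one-sample tests.

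For the studentized statistic I would additionally show that the random denominators can be swapped for population ones: since $\si_{qk}^2$ is bounded away from $0$ and $\infty$ and $\max_{1\le k\le p}|\hat\si_{qk}^2-\si_{qk}^2|\le\|\wh\bSigma_q-\bSigma_q\|_\infty=o_P(1)$ for $q=1,2$ by Lemma~\ref{lm2}, one gets $\max_{1\le k\le p}\big|(m\hat\si_{1k}^2+n\hat\si_{2k}^2)/(m\si_{1k}^2+n\si_{2k}^2)-1\big|=o_P(1)$, and Gaussian anti-concentration then transfers the approximation from $\bSigma_{1,2}$ to the correlation matrix $\bR_{1,2}$ of $\bSigma_{1,2}$, giving $\sup_t|\p_{H^{\rII}_0}(T^{\rII}_{\rs}\le t)-\p(|\bG_{\rs}|_\infty\le t)|\to0$ with $\bG_{\rs}\sim\text{N}(\bzero,\bR_{1,2})$.

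Next I would handle the critical values. Conditionally on $(\mathcal{X}_n,\mathcal{Y}_m)$, $\bW^{\rII}_{\ns}\sim\text{N}(\bzero,\wh\bSigma_{1,2})$, and the Gaussian comparison lemma bounds $\sup_t|\p(|\bW^{\rII}_{\ns}|_\infty\le t\mid\mathcal{X}_n,\mathcal{Y}_m)-\p(|\bG_{\ns}|_\infty\le t)|$ by a fractional power of $\|\wh\bSigma_{1,2}-\bSigma_{1,2}\|_\infty$ times a polylogarithmic factor in $p$. Here $\|\wh\bSigma_{1,2}-\bSigma_{1,2}\|_\infty\le\frac{m}{N}\|\wh\bSigma_1-\bSigma_1\|_\infty+\frac{n}{N}\|\wh\bSigma_2-\bSigma_2\|_\infty=o_P(1)$ by Lemma~\ref{lm2}, and likewise $\|\wh\bR_{1,2}-\bR_{1,2}\|_\infty=o_P(1)$; under either condition~{\rm(i)} or condition~{\rm(ii)} the polylogarithmic factor is dominated, so the conditional law of $|\bW^{\rII}_{\nu}|_\infty$ is uniformly close to that of $|\bG_{\nu}|_\infty$. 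Together with the anti-concentration bound $\sup_t\p\big(\big||\bG_{\nu}|_\infty-t\big|\le\ve\big)\le C\ve\s{\log p}$, this forces the conditional $(1-\alpha)$-quantile ${\rm cv}^{\rII}_{\nu,\alpha}$ to converge in probability to $c_\nu(\alpha)$, the $(1-\alpha)$-quantile of $|\bG_{\nu}|_\infty$.

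Finally I would combine the pieces: $\p_{H^{\rII}_0}(\Psi^{\rII}_{\nu,\alpha}=1)=\p_{H^{\rII}_0}(T^{\rII}_{\nu}>{\rm cv}^{\rII}_{\nu,\alpha})$ is sandwiched, using anti-concentration once more to absorb the $o_P(1)$ gap between ${\rm cv}^{\rII}_{\nu,\alpha}$ and $c_\nu(\alpha)$, between probabilities of the form $\p(|\bG_{\nu}|_\infty>c_\nu(\alpha)\mp o(1))\to\alpha$. The main obstacle is bookkeeping rather than conceptual: one must verify that the rate in the Gaussian comparison step, which degrades with powers of $\log p$, still beats $\|\wh\bSigma_{1,2}-\bSigma_{1,2}\|_\infty$ under the weakest regime, (M1) with only $p=O(n^{r/2-1-\de})$, and must carry the self-normalization argument for $T^{\rII}_{\rs}$ through uniformly over $k$; both steps essentially duplicate what is done for the one-sample tests, now with two independent samples and the averaged covariance $\bSigma_{1,2}$.
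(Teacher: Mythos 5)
Your route is essentially the paper's: under $H^{\rII}_0$ you pool the two samples into $N=n+m$ independent, non-identically distributed, mean-zero summands whose covariance is $\bSigma_{1,2}=\frac{m}{N}\bSigma_1+\frac{n}{N}\bSigma_2$ (the supplement's sequence $\bxi_i$ with weight $\lambda=n/m$), apply the Chernozhukov--Chetverikov--Kato Gaussian approximation to the $2p$-dimensional dilation to handle the absolute value, remove the studentization on the event $\max_{k}|\hat\sigma_{qk}^2/\sigma_{qk}^2-1|\le t$ using Lemma~\ref{lm2} together with anti-concentration, and control the conditional law of $|\bW^{\rII}_{\nu}|_\infty$ by the Gaussian comparison lemma with $\|\wh{\bSigma}_{1,2}-\bSigma_{1,2}\|_\infty\le\frac{m}{N}\|\wh{\bSigma}_1-\bSigma_1\|_\infty+\frac{n}{N}\|\wh{\bSigma}_2-\bSigma_2\|_\infty=o_P(1)$. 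This is exactly Proposition~\ref{proposition.2} of the supplement, after which the size statement follows as in Theorem~\ref{one-sample.size}.

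The one step that does not work as written is the claim that the distributional closeness plus anti-concentration ``forces'' ${\rm cv}^{\rII}_{\nu,\alpha}$ to converge in probability to $c_\nu(\alpha)$, the $(1-\alpha)$-quantile of $|\bG_\nu|_\infty$. The anti-concentration bound $\sup_t\mathbb{P}\{\,\big||\bG_\nu|_\infty-t\big|\le\varepsilon\}\le C\varepsilon\s{\log (p)}$ is an \emph{upper} bound on the concentration function: it caps how fast the distribution function can rise, whereas deducing closeness of quantile \emph{values} from closeness of distribution functions requires a \emph{lower} bound on its increase near $c_\nu(\alpha)$, which is not available here. Moreover, even if an $o_P(1)$ quantile gap were granted, absorbing it through anti-concentration in your final sandwich would require the gap to be $o_P\{(\log p)^{-1/2}\}$, which you have not established. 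The paper avoids this entirely: it evaluates the uniform bound of Proposition~\ref{proposition.2} at the random conditional quantile itself and then uses only the defining property of that quantile together with anti-concentration applied to the conditional Gaussian, namely $\alpha\ge\mathbb{P}\big(|\bW^{\rII}_{\nu}|_\infty>{\rm cv}^{\rII}_{\nu,\alpha}\,|\,\mathcal{X}_n,\mathcal{Y}_m\big)\ge\alpha-Cn^{-1}\s{\log(p)}$; equivalently, one brackets ${\rm cv}^{\rII}_{\nu,\alpha}$ between the $(1-\alpha\mp\epsilon_n)$-quantiles of $|\bG_\nu|_\infty$ with $\epsilon_n\to0$ measured on the probability scale rather than the quantile scale. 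Replacing your quantile-convergence step by this bracketing (or by the paper's direct evaluation at the conditional quantile) closes the gap, and the rest of your proposal then coincides with the paper's proof.
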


Theorem~\ref{two-sample.size} implies that, under proper moment conditions, the proposed two-sample non-screening tests maintain nominal size $\alpha$ asymptotically, while allowing for either a polynomial or an exponential rate of growth of the dimension $p$ with respect to the sample size $n$. In Theorem~\ref{two-sample.power.1} below, the asymptotic power of the two-sample non-screening tests is analyzed.

\begin{theorem} \label{two-sample.power.1}
Let $(\wt{\bSigma}_1, \wt{\bSigma}_2)=(\widehat{\bSigma}_1, \widehat{\bSigma}_2)$. Assume that either condition {\rm(M1)} holds and $p=O(n^{r/2-1-\de})$ for some $\de>0$, or condition {\rm(M2)} holds and $\log(p) = o ( n^{\gamma /2  } )$. For given $0<\alpha<1$, let $ \la(p,\alpha)$ and $\{ \varepsilon_n\}_{n\geq 1}$ be as in Theorem {\rm\ref{one-sample.power.1}}. As $n, p \rightarrow \infty$, we have
{\rm(i)} $\mathbb{P}_{H^{\rII}_1} \{   \Psi^{\rII}_{\ns,\alpha}  =  1 \}  \rightarrow  1$ if
$
{  \max_{1\leq k\leq p} |\mu_{1 k}-\mu_{2 k}| }/{ \max_{1\leq k\leq p} ( \sigma_{1 k}^2/n + \sigma_{2 k}^2/m )^{1/2}} \geq  (1+ \varepsilon_n )   \la(p,\alpha)$, and {\rm (ii)} $
\mathbb{P}_{H^{\rII}_1}\{   \Psi^{\rII}_{\rs ,\alpha} = 1 \} \rightarrow 1 $
if
$
    \max_{1\leq k\leq p}  {|\mu_{1 k}-\mu_{2 k}|}/{(  \sigma_{1 k}^2/n + \sigma_{2 k}^2/m  )^{1/2} }  \geq (1+\varepsilon_n)  \la(p,\alpha).
$
\end{theorem}

%
%
%

The following theorem establishes asymptotic properties of the proposed two-step testing procedures. Part (i) in Theorem \ref{two-step.one-sample.theory} below shows that the type I error of the proposed screening-based two-step procedures can be controlled by the prescribed significance level asymptotically. Similar to the comparison between the studentized and non-studentized tests in Theorem \ref{one-sample.power.1}, parts (ii) and (iii) in Theorem \ref{two-step.one-sample.theory} below also imply that the screening-based two-step studentized test is consistent in a larger region than its non-studentized counterpart.

\begin{theorem}  \label{two-step.one-sample.theory}
Let $\wt{\bSigma}_1= \wh{\bSigma}_1$. Assume that either condition {\rm(M1)} holds and $p=O(n^{r/2-1-\de})$ for some $\de>0$, or condition {\rm(M2)} holds for some $\gamma\geq \frac{1}{2}$ and $\log(p) = o (n^{1/7} )$. We have
{\rm (i)} $\limsup_{n\rightarrow \infty}    \mathbb{P}_{H^{\rI}_0} \{  \Psi^{f,\rI}_{\nu ,\alpha} =1  \} \leq  \alpha$ for $\nu\in\{\ns,\rs\}$,
{\rm (ii)} $\mathbb{P}_{H^{\rI}_1}  \{ \Psi^{f,\rI}_{\ns ,\alpha} =1 \} \rightarrow 1$ if the condition for part {\rm(i)} in Theorem {\rm\ref{one-sample.power.1}} holds, {\rm (iii)}
$\mathbb{P}_{H^{\rI}_1}  \{ \Psi^{f,\rI}_{\rs ,\alpha} =1 \} \rightarrow 1$ if the condition for part {\rm(ii)} in Theorem {\rm\ref{one-sample.power.1}} holds.
\end{theorem}

Similarly, the following theorem establishes the limiting null property and the asymptotic power for the proposed two-step procedures with pre-screening in the two-sample settings.

\begin{theorem} \label{two-step.two-sample.theory}
Let $(\wt{\bSigma}_1, \wt{\bSigma}_2)=(\widehat{\bSigma}_1, \widehat{\bSigma}_2)$. Assume that either condition {\rm(M1)} holds and $p=O(n^{r/2-1-\de})$ for some $\de>0$, or condition {\rm(M2)} holds for some $\gamma\geq \frac{1}{2}$ and $\log(p) = o ( n^{1/7  } )$. We have {\rm (i)} $\limsup_{n\rightarrow \infty}    \mathbb{P}_{H^{\rII}_0} \{  \Psi^{f,\rII}_{\nu ,\alpha} =1  \} \leq  \alpha$ for $\nu\in\{\ns,\rs\}$,
{\rm (ii)} $\mathbb{P}_{H^{\rII}_1}  \{ \Psi^{f,\rII}_{\ns ,\alpha} =1 \} \rightarrow 1$ if the condition for part {\rm(i)} in Theorem {\rm\ref{two-sample.power.1}} holds, and {\rm (iii)}
$\mathbb{P}_{H^{\rII}_1}  \{ \Psi^{f,\rII}_{\rs ,\alpha} =1 \} \rightarrow 1$ if the condition for part {\rm(ii)} in Theorem {\rm\ref{two-sample.power.1}} holds.
\end{theorem}

\section{Simulation studies}\label{section:numerical}


In this section, we report the simulation results from several experiments to evaluate the performance of the proposed tests, including the non-studentized test without screening $\Psi_{\ns,\alpha}$, the studentized test without screening $\Psi_{\rs,\alpha}$, the non-studentized test with screening $\Psi_{\ns,\alpha}^f$ and the studentized test with screening $\Psi_{\rs,\alpha}^f$, for both one- and two-sample problems. For ease of exposition, we suppress the superscripts $\rm (I)$ and $\rm (II)$. To demonstrate the proposed tests, we also implemented peer testing procedures for comparison. For the one-sample problem, we compared the proposed tests with the test by \cite{ZhongChenXu_2013} (denoted by ZCX hereafter) and the Higher Criticism (HC) procedure by \cite{DonohoJin04} . We used the method proposed by \cite{Li_Siegmund_2015} to obtain more accurate approximation of the critical values in HC procedure. For the two-sample problem, we experimented the tests by \cite{ChenQin_2010} (denoted by CQ hereafter) and \cite{CaiLiuXia_2014} (denoted by CLX hereafter) as well as the HC procedure.
. 

In the simulation studies, we considered a wide range of covariance structures, including both the sparse and dense settings to investigate the numerical performance of the proposed tests. We generate data with sample sizes $n=40$ or $80$ in one-sample case and $(n,m)=(40,40)$ or $(80,80)$ in two-sample case. The dimension $p$ took values in $120,360$ or $1080$. The empirical size and power were defined as the proportion of the rejection among  $1500$ replications. We used the sample covariance matrices to generate $M=1500$ Monte Carlo samples to compute the critical values for our proposed tests. We only report the results for six models in this section and more models are considered in the supplementary material.


\subsection{One-sample case} \label{subsection:numerical one sample}

We took $\bmu_1=\textbf{0}$ under the null hypothesis, whereas, under the alternative, we took $\bmu_1=(\mu_{11},\ldots, \mu_{1p})^\T$ to have $\lfloor \kappa p^{r} \rfloor$ non-zero entries uniformly and randomly drawn from $\{1,\ldots,p\}$, where $\kappa$ was an integer and $\lfloor x \rfloor$ denotes the integer part of $x$. We took $r=0,0.4,0.5,0.7$ and $0.85$, where $\kappa=8$ if $r=0$ and $\kappa=1$ otherwise. The choices of $r=0$ and $r=0.7$ or $0.85$ correspond to the sparse and non-sparse settings, respectively. The magnitudes of non-zero entries $\mu_{1\ell}$ were set to be $\{ 2\beta\sigma_{1,\ell\ell}\log(p)/n \}^{1/2}$, where $\sigma_{1,\ell\ell}$ denotes the $\ell$th diagonal entry of $\bSigma_1$. We took $\beta=0.01,0.2,0.4,0.6$ and use $\beta=0.01$ to mimic the scenario of weak signals.

The following two models were used to generate random samples $\bX_i=\bZ_i+\bmu_1$ for $ i=1,\ldots, n$, where $\{\bZ_i\}_{i=1}^n\stackrel{\textrm{i.i.d}}{\sim}\textrm{N}(\bzero,\bSigma_1)$ with $\bSigma_1=(\sigma_{1,k \ell})_{1\leq k,\ell \leq p}$.
\begin{itemize}

\item Model 1$^{\rm{(I)}}$: $\sigma_{1,k \ell}=0.4^{|k-\ell |}$ for $1\leq k,\ell \leq p$. 


\item Model 2$^{\rm{(I)}}$: 
Let $\{\theta_k\}_{k=1}^p\stackrel{\textrm{i.i.d.}}{\sim}{\rm Unif}(1,2)$. We took $\sigma_{1,kk}=\theta_k$ and $\sigma_{1,k \ell } =\rho_\alpha(|k -\ell |)$ for $k \neq \ell$, where
$\rho_\alpha(e)=\tfrac12 \{(e+1)^{2H}+(e-1)^{2H}-2e^{2H}\}$ with $H=0.9$.
\end{itemize}
Model 1$^{\rm{(I)}}$ has sparse covariance structure while Model 2$^{\rm{(I)}}$ takes long range dependence into account which exhibits a non-sparse structure. In addition, we considered the following model with non-Gaussian data to study the robustness of the proposed tests against Gaussian assumptions. The covariance structure in the following Model 3$^{\rm{(I)}}$ is non-sparse.

\begin{itemize}
\item Model 3$^{\rm{(I)}}$: 
Let $\{\bX_i\}_{i=1}^n\stackrel{\textrm{i.i.d.}}{\sim} t_\omega(\bmu_1,\bSigma_1) $, where $t_\omega(\bmu_1,\bSigma_1)$ is the non-central multivariate $t$-distribution with non-central parameter $\bmu_1$, degrees of freedom $\omega=5$, and $\sigma_{1,k \ell}=0.995^{|k-\ell|}$.

\end{itemize}

Simulation results for the tests $\Psi_{\ns,\alpha}$, $\Psi_{\rs,\alpha}$, $\Psi_{\ns,\alpha}^f$ and $\Psi_{\rs,\alpha}^f$ and the ZCX and HC tests are summarized in Table \ref{size1} and Figure \ref{f01}. Table \ref{size1} displays the empirical sizes of all the tests. It can be seen that in all the models, the empirical sizes of the non-studentized tests $\Psi_{\ns,\alpha}$ and $\Psi_{\ns,\alpha}^f$ are reasonably close to the nominal level $0.05$ for both $n=40$ and $n=80$. The proposed studentized tests $\Psi_{\rs,\alpha}$ and $\Psi_{\rs,\alpha}^f$ have slightly inflated size when $n$ is relatively small but improve with larger sample sizes. The ZCX test maintains
the nominal size for Model 1$^{\rm{(I)}}$ but fails in the presence of long range dependence or non-sparse covariance structures. The HC procedure also fails in maintaining the nominal significance when the sample size $n$ is small or the dependency is strong and complex.

\begin{table}[h!]
    \centering
    {\renewcommand{\arraystretch}{1.1}
 \begin{tabular}{cccccccccc}\toprule
  &  \multicolumn{3}{c}{Model 1$^{\rm{(I)}}$}         &    \multicolumn{3}{c}{Model 2$^{\rm{(I)}}$}       &  \multicolumn{3}{c}{Model 3$^{\rm{(I)}}$}                                    \\ \toprule
$\text{tests}~$/$~p$ &  120&360&1080&120&360&1080&120&360&1080\\\toprule & \multicolumn{9}{c}{$n=40$}     \\ \midrule
 $\Psi_{\ns,\alpha}$  & 0.037&0.027&0.021& 0.025&0.028&0.023 &0.054&0.044&0.033\\ [0.5ex]
  $\Psi_{\rs,\alpha}$  & 0.133 & 0.126 & 0.168&0.093&0.113&0.202 & 0.065&0.080&0.096 \\ [0.5ex]
   $\Psi^f_{\ns,\alpha}$  & 0.044& 0.045& 0.043& 0.039&0.027&0.039 & 0.054&0.046&0.033  \\ [0.5ex]
    $\Psi^f_{\rs,\alpha}$  &  0.150 & 0.154& 0.194 &0.095&0.170&0.218 & 0.060&0.058&0.093 \\ [0.5ex]
ZCX  & 0.064&0.078 & 0.089& 1&1&1 & 0.382&0.487&0.673 \\ [0.5ex]
HC & 0.123& 0.225& 0.316&0.129&0.249&0.320&0.274&0.377&0.468\\
 \midrule

& \multicolumn{9}{c}{$n=80$}     \\ \midrule

 $\Psi_{\ns,\alpha}$ & 0.037&0.036&0.029&0.040&0.032&0.042 & 0.049&0.047&0.040\\ [0.5ex]
  $\Psi_{\rs,\alpha}$  & 0.060& 0.082& 0.092&0.082&0.083&0.094 &0.058&0.058&0.067 \\ [0.5ex]
   $\Psi^f_{\ns,\alpha}$  & 0.048& 0.045& 0.043&0.051&0.045&0.040 &  0.049&0.048&0.044 \\ [0.5ex]
    $\Psi^f_{\rs,\alpha}$  & 0.086& 0.097& 0.094&0.095&0.091&0.110 &0.060&0.058&0.069 \\ [0.5ex]
ZCX  & 0.080&0.072&0.071&1&1&1&  0.404&  0.506&0.702\\ [0.5ex]
HC & 0.063& 0.119& 0.142 &0.079&0.145&0.175&0.267&0.363&0.471
  \\ \bottomrule
  \end{tabular}
  }
  \caption{Empirical sizes of the proposed tests (non-studentized without screening $\Psi_{\ns,\alpha}$, studentized without screening $\Psi_{\rs,\alpha}$, non-studentized with screening $\Psi_{\ns,\alpha}^f$, and studenzied with screening $\Psi_{\rs,\alpha}^f$) for the one-sample problem \eqref{eq:onesample}, along with those of the tests by \citet{ZhongChenXu_2013} (ZCX),
   and \citet{DonohoJin04} (HC) at $5\%$  nominal significance.
    Models with Gaussian data and sparse or long range dependence (non sparse) covariance matrices, and the autoregressive model with $t$-distributed innovations are considered when $n=40, 80$ and $p=120,360,1080$. }\label{size1}
\end{table}

To compare the empirical powers, we took $n=80$ and $p=1080$. For Model 1$^{\rm{(I)}}$, we compared the proposed tests with the ZCX test (column (a) in Figure \ref{f01}), whereas, for the other two models, we only focused on comparing the four proposed tests as they maintain the nominal size reasonably well and other tests fail in size control. Column (a) in Figure \ref{f01} shows that $\Psi_{\rs,\alpha}$, $\Psi_{\rs,\alpha}^f$ and $\Psi_{\ns,\alpha}^f$ provide non-trivial powers against alternatives with sparse signals ($r=0$) even under the weak signal settings ($\beta=0.01$); in contrast, the ZCX test improves its power as the signal getting dense, which is expected for sum of squares-type statistics. As the signal strength increases, all tests under consideration gain powers. The proposed tests with screening, $\Psi_{\ns,\alpha}^f$ and $\Psi_{\rs,\alpha}^f$, outperform the ZXC test under sparse alternatives ($r=0,0.4$), and their powers are close to that of the ZCX test for dense signals ($r\geq 0.7$). From columns (b) and (c) in Figures \ref{f01}, we observe that the screening procedure substantially improves the power performance of the tests for all settings, which reflects the heuristic discussions and motivations in Section \ref{ps:sec1}. The non-studentized test with screening $\Psi_{\ns,\alpha}^f$ performs comparably to, or better than, the studentized test without screening $\Psi_{\rs,\alpha}$ under sparse alternatives ($r\leq 0.5$). This suggests that $\Psi_{\ns,\alpha}^f$ is more preferable in practice given its capability in maintaining the nominal significance for small sample size. 
\begin{figure}[h!]
\centering
\begin{tabular}{ccc}  
\includegraphics[width=1.75in,height=4.25cm]{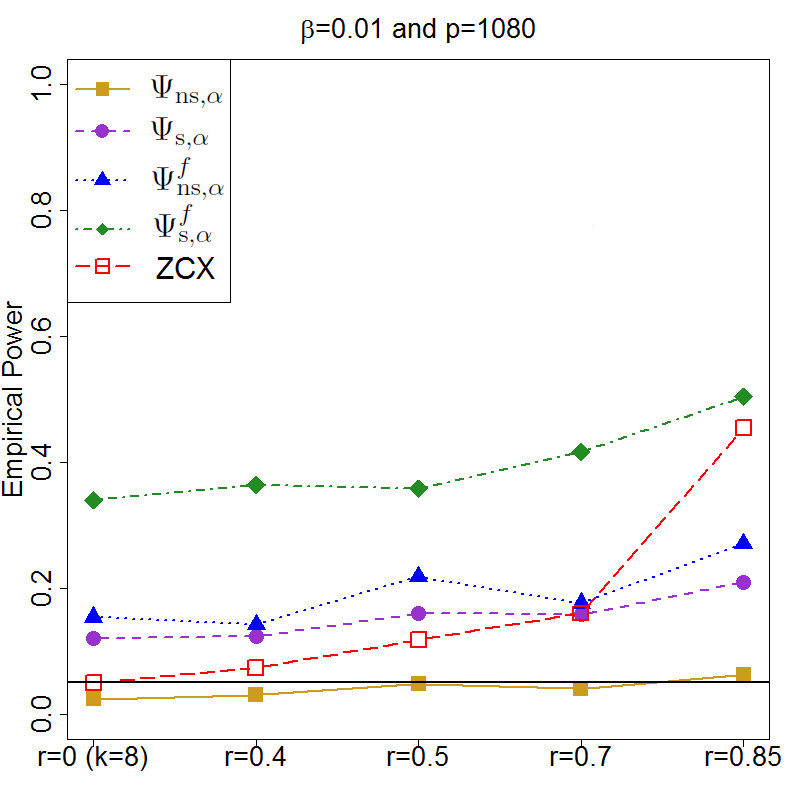}
&\includegraphics[width=1.75in,height=4.25cm]{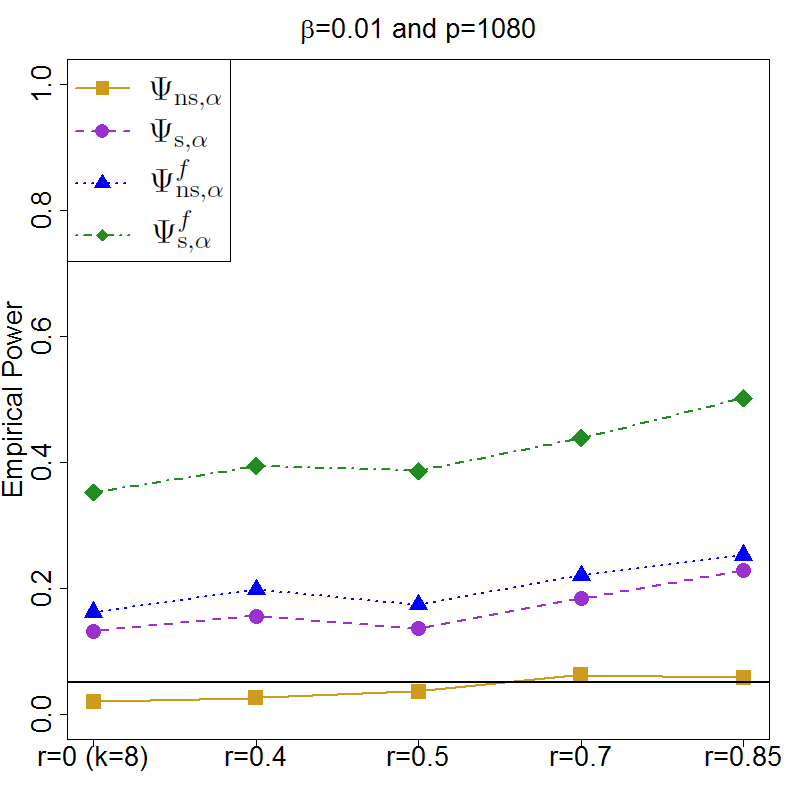}
&\includegraphics[width=1.75in,height=4.25cm]{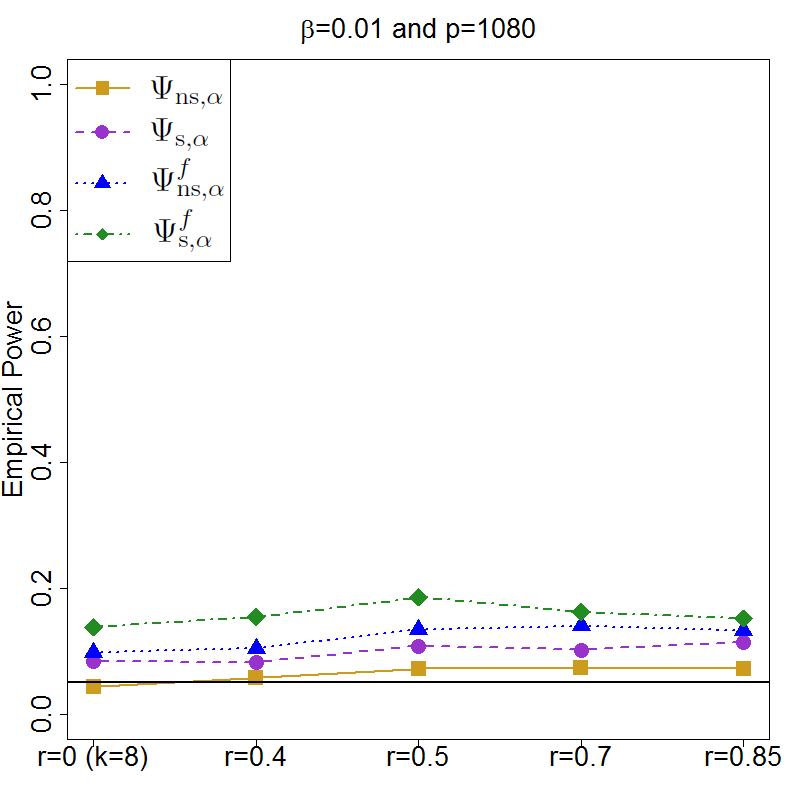}\\
\includegraphics[width=1.75in,height=4.25cm]{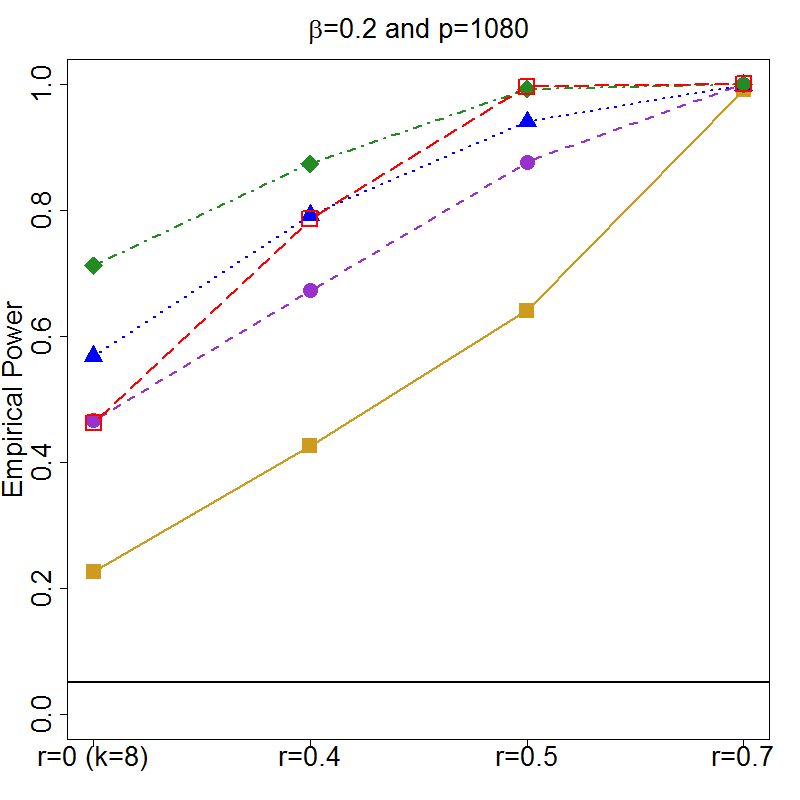}
&\includegraphics[width=1.75in,height=4.25cm]{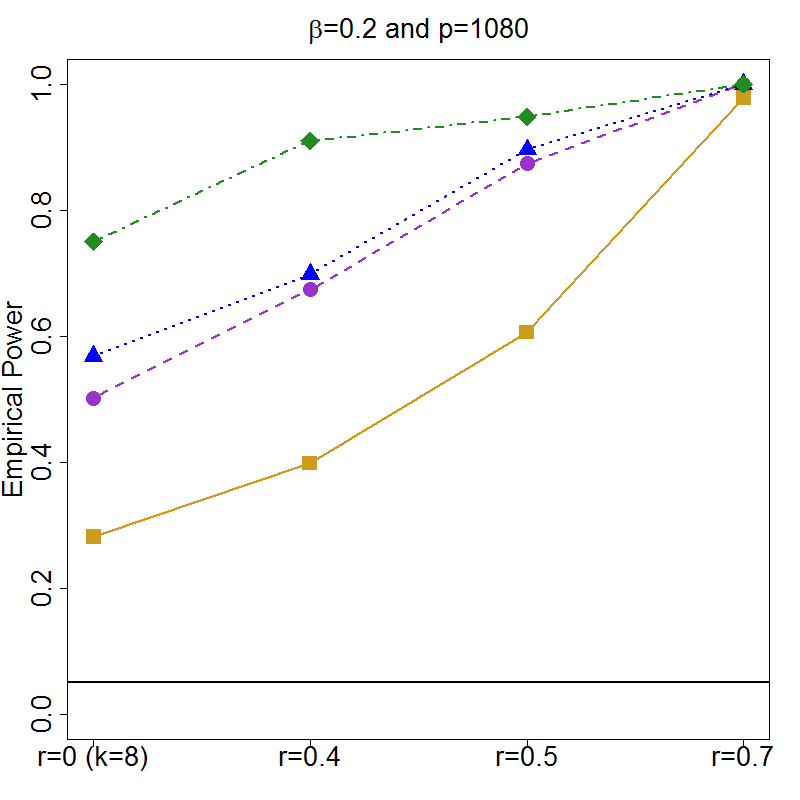}
&\includegraphics[width=1.75in,height=4.25cm]{M3S1p1080f2.png}\\
\includegraphics[width=1.75in,height=4.25cm]{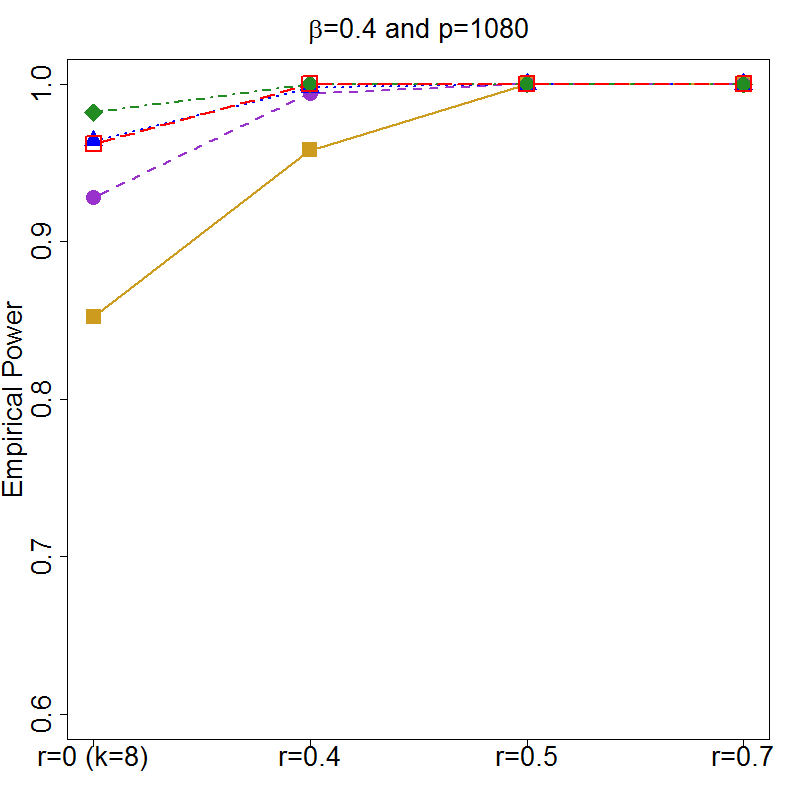}
&\includegraphics[width=1.75in,height=4.25cm]{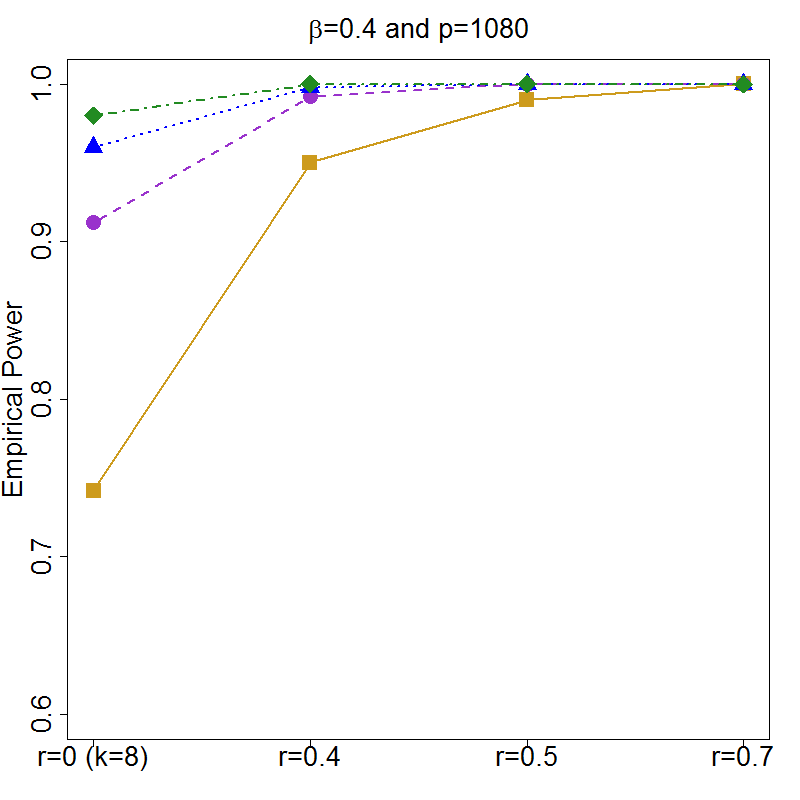}
&\includegraphics[width=1.75in,height=4.25cm]{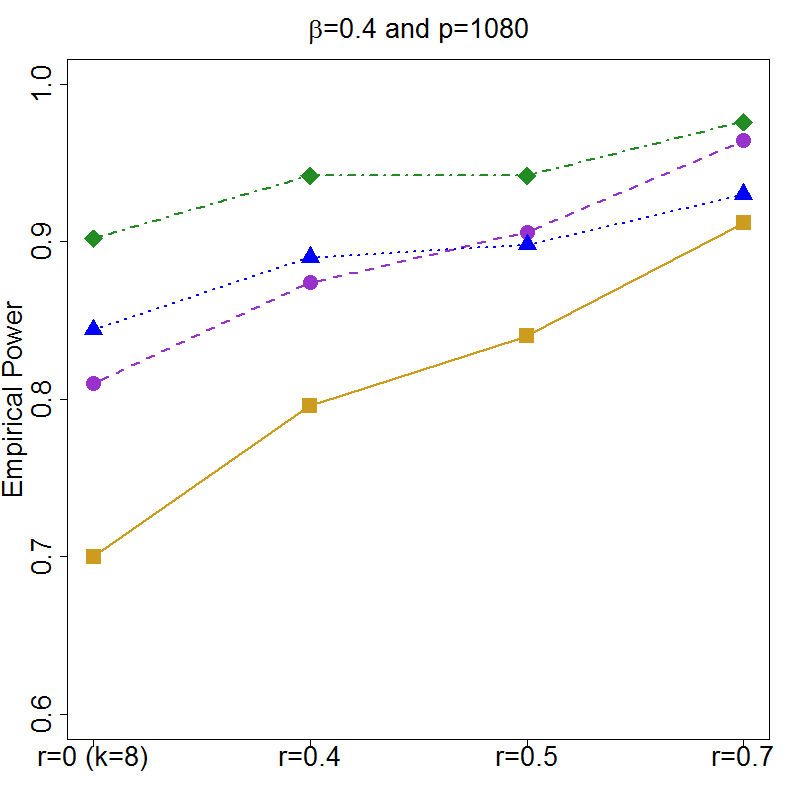}\\
\includegraphics[width=1.75in,height=4.25cm]{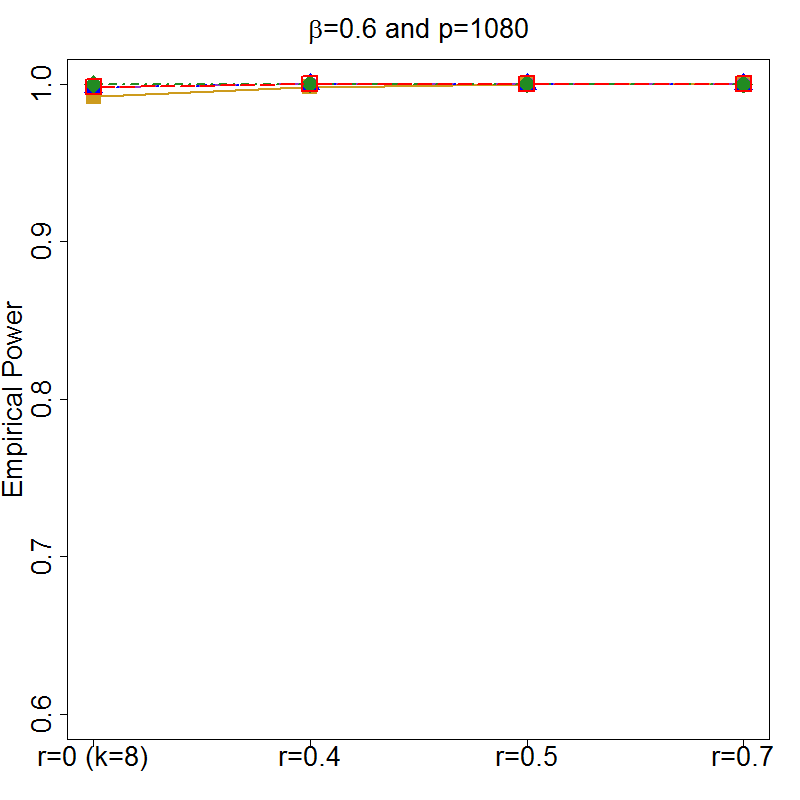}
&\includegraphics[width=1.75in,height=4.25cm]{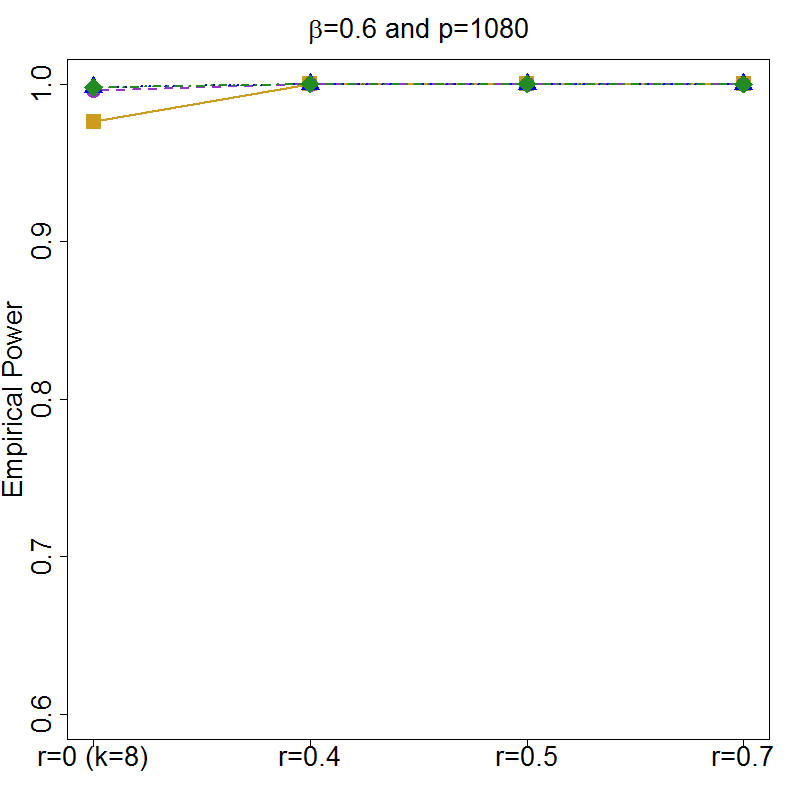}
&\includegraphics[width=1.75in,height=4.25cm]{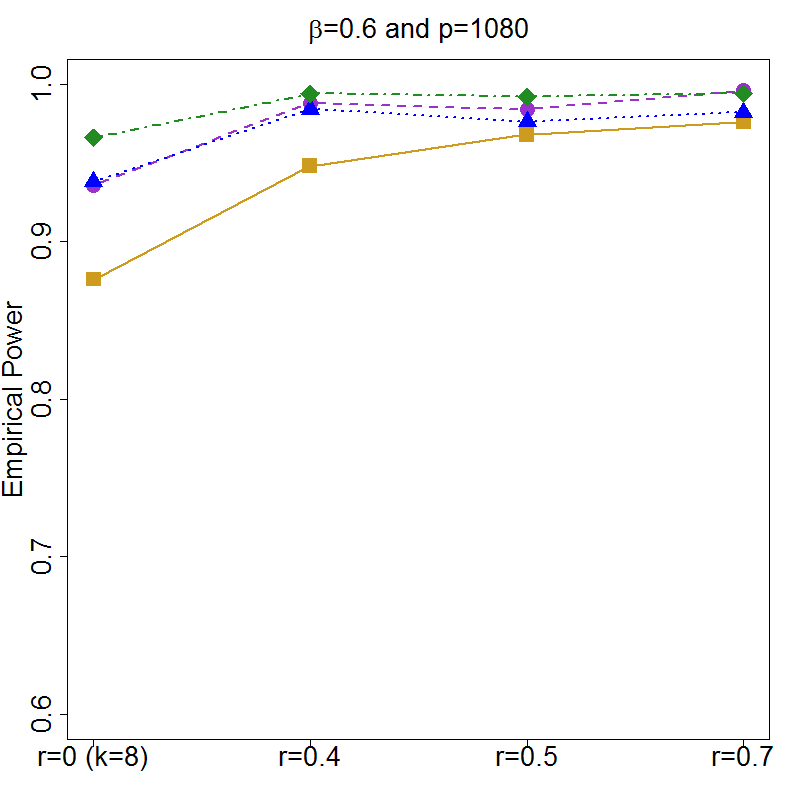}\\
(a) Model 1$^{\rm(I)}$&(b) Model 2$^{\rm (I)}$&(c) Model 3$^{\rm (I)}$
\end{tabular}
\caption{  Empirical powers of the proposed tests (non-studentized without screening $\Psi_{\ns,\alpha}$, studentized without screening $\Psi_{\rs,\alpha}$, non-studentized with screening $\Psi_{\ns,\alpha}^f$, and also studenzied with screening $\Psi_{\rs,\alpha}^f$) against alternatives with different levels of the signal strength ($\beta$) and sparsity ($1-r$) for the one-sample problem \eqref{eq:onesample} when $n=80$ and $p=1080$ at 5\% nominal significance for the Gaussian data and sparse covariance matrices in Model 1$^{\rm{(I)}}$ (column (a)), the Gaussian data and long range dependence covariance matrices in Model 2$^{\rm{(I)}}$ (column (b)), and the autoregressive process model, Model 3$^{\rm{(I)}}$, with $t$-distributed innovations (column (c)). Column (a) also displays the powers of the test by \citet{ZhongChenXu_2013} (ZCX).}
   \label{f01}
\end{figure}

\subsection{Two-sample case}
\label{subsection:numerical two sample}

We took $\bmu_1=\bmu_2=\textbf{0}$ under the null hypothesis, whereas, under the alternative, we let $\bmu_1=(\mu_{11},\ldots, \mu_{1p})^\T$ to have $\lfloor \kappa p^{r} \rfloor$ non-zero entries uniformly and randomly drawn from $\{1,\ldots,p\}$, where $\kappa$ is an integer. As before, we considered $r=0,0.4,0.5,0.7$ and $0.85$, where $\kappa=8$ if $r=0$ and $\kappa=1$ otherwise. 
The magnitudes of non-zero entries $\mu_{1\ell}$ were set to be $\{2\beta\sigma_{\ell\ell}\log(p)(1/n+1/m)\}^{1/2}$, where $\sigma_{\ell\ell}$ is the $\ell$th diagonal entry of the pooled covariance matrix $\bSigma_{1,2}$ as in \eqref{Sigma12}. We took $\beta=0.01,0.2,0.4,0.6$.

The following two models were used to generate random samples $\bX_i=\bZ_{1,i}+\bmu_1, \bY_j=\bZ_{2,j}+\bmu_2$ for $ i=1,\ldots, n$ and $j=1,\ldots, m$, where $\{\bZ_{1,i}\}_{i=1}^n\stackrel{\textrm{i.i.d.}}{\sim}\textrm{N}(\bzero,\bSigma_1)$ and $\{\bZ_{2,j}\}_{j=1}^m\stackrel{\textrm{i.i.d.}}{\sim}\textrm{N}(\bzero,\bSigma_2)$ with $\bSigma_1=(\sigma_{1,k \ell})_{1\leq k,\ell \leq p}$ and $\bSigma_2=(\sigma_{2,k \ell})_{1\leq k,\ell \leq p}$, respectively.
\begin{itemize}
\item Model 1$^{\rm{(II)}}$: For $k=1,\ldots, p$ and $q=1,2$, $\sigma_{q,kk}\stackrel{\textrm{i.i.d.}}{\sim}{\rm Unif}(2,3)$, $\sigma_{q,k\ell}=0.7$ for $10(t-1)+1\leq k\neq \ell\leq 10t$, where $t=1,\ldots, \lfloor p/10\rfloor$, and $\sigma_{q,k\ell}=0$ otherwise.

\item Model 2$^{\rm{(II)}}$: Let $\bF=(f_{k\ell})_{1\leq k,\ell\leq p}$ with $f_{k k}=1,f_{k,k+1}=f_{k+1,k}=0.5$, $\bU_q \sim \mathcal{U}(\mathcal{V}_{p,k_0})$, the uniform distribution on the Stiefel manifold for $q=1,2$, and $\bTheta=\textrm{diag}\{\theta_{11},\ldots,\theta_{pp}\}$ with $\theta_{kk}\stackrel{\textrm{i.i.d.}}{\sim}{\rm Unif}(1,6)$. Set $k_0=10$ and put $\bSigma_q=\bTheta^{1/2}(\bF+\bU_q \bU_q^\T)\bTheta^{1/2}$ for $q=1,2$.

\end{itemize}

Model 1$^{\rm{(II)}}$ and Model 2$^{\rm{(II)}}$ are with sparse and non-sparse covariance structures, respectively.  In addition, we considered the following model with non-Gaussian data.

\begin{itemize}
\item Model 3$^{\rm{(II)}}$: 
Let $\{\bX_i\}_{i=1}^n\stackrel{\textrm{i.i.d.}}{\sim} t_{\omega_1}(\bmu_1,\bSigma_1)$ and $\{\bY_j\}_{j=1}^m\stackrel{\textrm{i.i.d.}}{\sim} t_{\omega_2}(\bmu_2,\bSigma_2)$, where $\omega_1=5, \omega_2=7$, $\sigma_{1,k \ell}=0.995^{|k-\ell|}$ and $\sigma_{2,k \ell}=0.7^{|k-\ell|}$.

\end{itemize}

The numerical results on the proposed tests $\Psi_{\ns,\alpha}$, $\Psi_{\rs,\alpha}$, $\Psi_{\ns,\alpha}^f$ and $\Psi_{\rs,\alpha}^f$ and the HC, CQ and CLX tests are summarized in Table \ref{size2} and Figure \ref{f02}. Table \ref{size2} displays the empirical sizes. It can be seen that in all the models, the empirical sizes for  $\Psi_{\ns,\alpha}$ and $\Psi_{\ns,\alpha}^f$ are reasonably close to the nominal level $0.05$ for both $(n,m)=(40,40)$ and $(80,80)$. The studentized tests, $\Psi_{\rs,\alpha}$ and $\Psi_{\rs,\alpha}^f$, have slightly inflated significance when the sample size is relatively small but improve when the sample size increases. Additionally, the CLX test fails to maintain the nominal size for Model 3$^{{\rm (II)}}$ due to the strong dependency in the covariance structures. Analogous to the observation in Section \ref{subsection:numerical one sample}, it is difficult for the HC procedure to maintain the nominal significance when the sample size is small or the dependency is strong and complex. The CQ test maintains the nominal significance reasonably well in all the models.

\begin{table}[h!]
    \centering
    {\renewcommand{\arraystretch}{1.1}
   \begin{tabular}{cccccccccc}\toprule

  &  \multicolumn{3}{c}{Model 1$^{(\rm{II})}$}      &      \multicolumn{3}{c}{Model 2$^{(\rm{II})}$}    &      \multicolumn{3}{c}{Model 3$^{(\rm{II})}$}                    \\ \toprule

$\text{tests}~$/$~p$ &  120&360&1080&120&360&1080&120&360&1080\\midrule
     & \multicolumn{9}{c}{$(n,m)=(40,40)$}     \\ \midrule

 $\Psi_{\ns,\alpha}$ & 0.039&0.041&0.041&0.042&0.044&0.039& 0.052&0.036&0.042\\ [0.5ex]
  $\Psi_{\rs,\alpha}$  &  0.094&0.112&0.125 &0.092&0.097&0.116&0.086&0.090&0.092 \\ [0.5ex]
   $\Psi^f_{\ns,\alpha}$  & 0.055&0.048&0.057& 0.049&0.055&0.054 &  0.055&0.039&0.052 \\ [0.5ex]
    $\Psi^f_{\rs,\alpha}$  & 0.092&0.120&0.152& 0.098&0.131&0.053 & 0.090&0.094&0.094 \\ [0.5ex]
HC&0.086&0.156&0.157&0.078&0.144&0.148&0.172&0.237&0.283\\ [0.5ex]
CQ & 0.044&0.049&0.034& 0.046&0.049&0.051& 0.064&0.066&0.054  \\[0.5ex]
CLX & 0.101&0.103&0.138&0.081&0.087&0.098 &  0.204&0.181&0.137\\
 \midrule

& \multicolumn{9}{c}{$(n,m)=(80,80)$}     \\ \midrule

 $\Psi_{\ns,\alpha}$ & 0.054&0.039&0.046& 0.053&0.040&0.040& 0.046&0.045&0.047\\ [0.5ex]
  $\Psi_{\rs,\alpha}$  & 0.074&0.062&0.086& 0.058&0.064&0.090 & 0.059&0.065&0.074\\ [0.5ex]
   $\Psi^f_{\ns,\alpha}$  & 0.065&0.052&0.060& 0.063&0.050&0.058 &  0.047&0.048&0.056 \\ [0.5ex]
    $\Psi^f_{\rs,\alpha}$  & 0.088&0.076&0.098&0.070&0.080&0.093 & 0.062&0.069&0.086 \\ [0.5ex]
HC  &0.068&0.086&0.099&0.053&0.085&0.085&0.165&0.239&0.263\\ [0.5ex]
CQ & 0.046&0.039&0.048&0.048&0.038&0.048 &  0.044&0.054&0.056\\[0.5ex]
CLX & 0.107&0.090&0.104&0.057&0.057&0.089& 0.289&0.352& 0.297
  \\ \bottomrule
  \end{tabular}
  }

  \caption{Empirical sizes of the proposed tests (non-studentized without screening $\Psi_{\ns,\alpha}$, studentized without screening $\Psi_{\rs,\alpha}$, non-studentized with screening $\Psi_{\ns,\alpha}^f$, and studenzied with screening $\Psi_{\rs,\alpha}^f$) for the two-sample problem \eqref{eq:twosample}, along with those of the tests by  \citet{DonohoJin04} (HC), \citet{ChenQin_2010} (CQ), and \citet {CaiLiuXia_2014} (CLX) at 5\% nominal significance. Models with Gaussian data and sparse or non-sparse covariance matrices, and with non-Gaussian data are considered when $n=m=40$ or $80$ and $p=120,360,1080$.  }\label{size2}

\end{table}

To evaluate the power, we compared the proposed tests with the CQ and CLX tests for $(n,m)=(80,80)$ and $p=1080$. It can be seen that the tests with screening, $\Psi_{\ns,\alpha}^f$ and $\Psi_{\rs,\alpha}^f$, outperform both the CQ and CLX tests against alternatives with sparse signals $(r=0)$ for different signal strength $\beta$. On the other hand, all the tests perform similarly when the signals become less sparse and strong. The CQ test gains more powers when signals become less sparse, as expected for sum of squares-type statistics. Its power approaches to those of the proposed tests with screening $\Psi_{\ns,\alpha}^f$ and $\Psi_{\rs,\alpha}^f$ when the signals become less sparse and stronger ($r\geq 0.5, \beta\geq 0.4$) in the models except Model 3$^{\rm{(II)}}$. In Model 3$^{\rm{(II)}}$, all the proposed tests outperform the CQ test substantially as the sum of squares-type test statistics may lose power for heavy tailed sampling distributions. The CLX test performs similarly to the  $\Psi_{\ns,\alpha}$ and $\Psi_{\rs,\alpha}$, but is outperformed by the proposed tests with screening for all settings. The simulation results agree with the heuristic discussion and the theoretical justification that the screening step substantially improves the power of proposed tests. Similar to the observations in Section \ref{subsection:numerical one sample},  $\Psi_{\ns,\alpha}^f$ is preferable in practice whenever the sample size is relatively small.

\begin{figure}[h]
\centering
\begin{tabular}{ccc}
\includegraphics[width=1.75in,height=4.25cm]{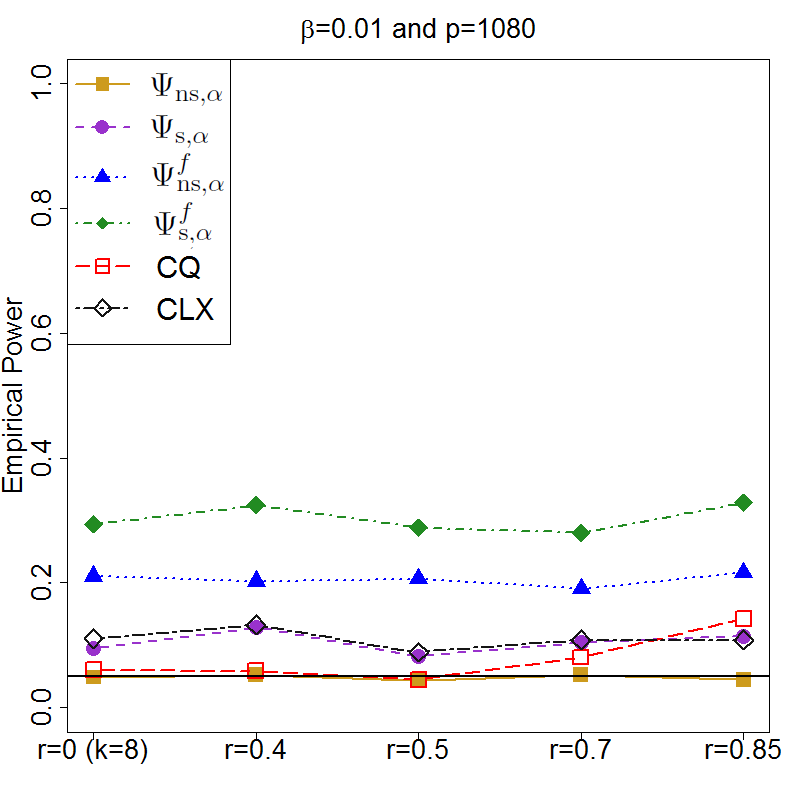}
&\includegraphics[width=1.75in,height=4.25cm]{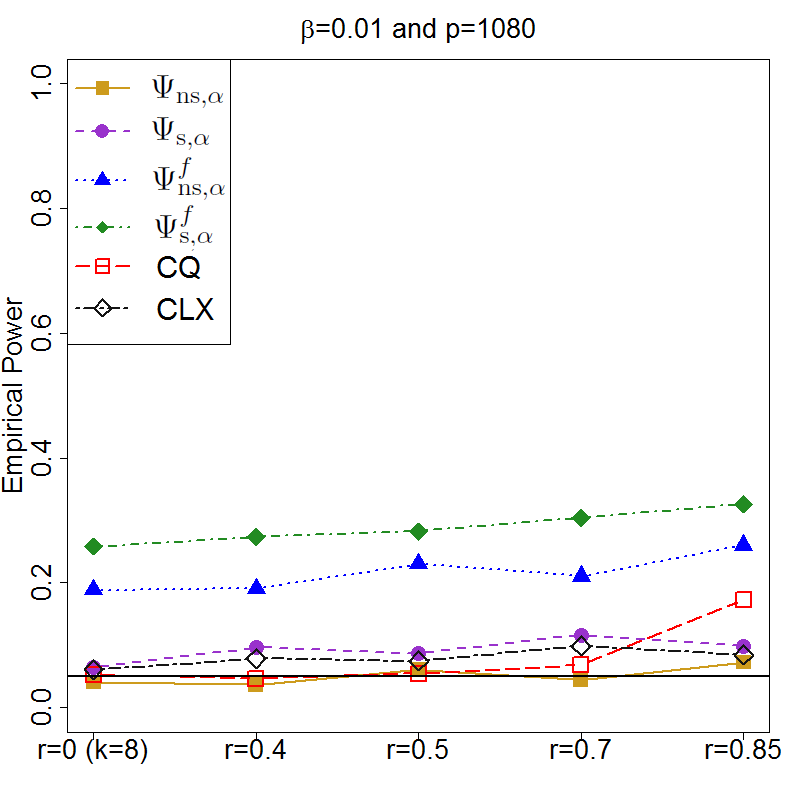}
&\includegraphics[width=1.75in,height=4.25cm]{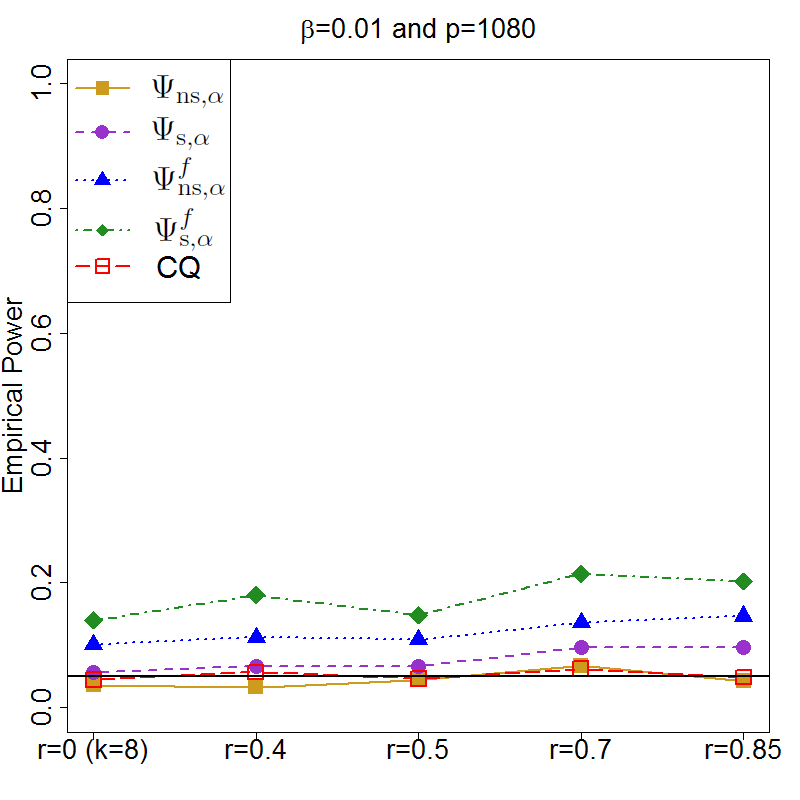}\\
\includegraphics[width=1.75in,height=4.25cm]{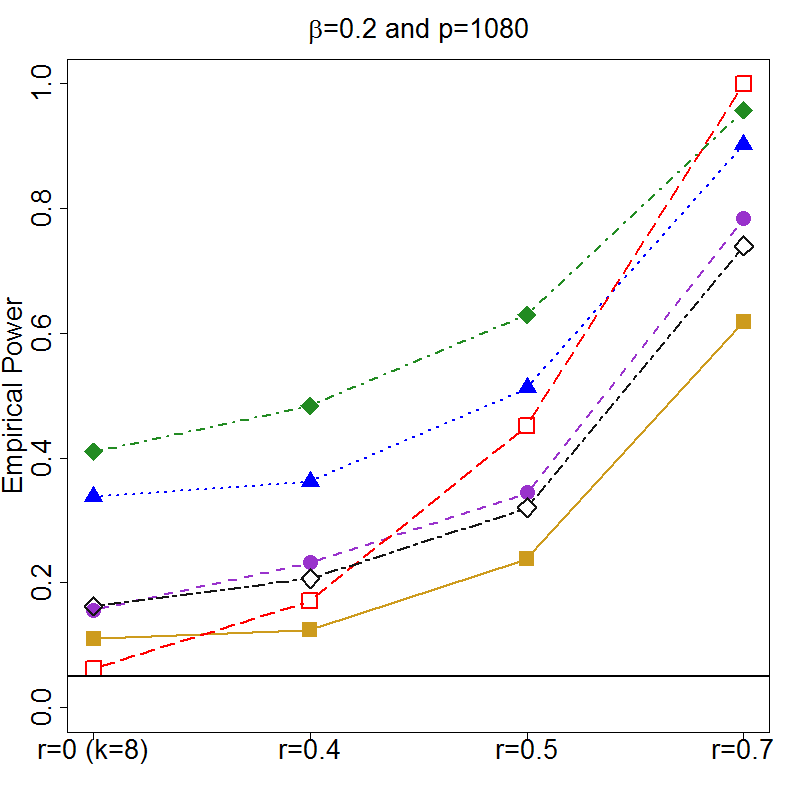}
&\includegraphics[width=1.75in,height=4.25cm]{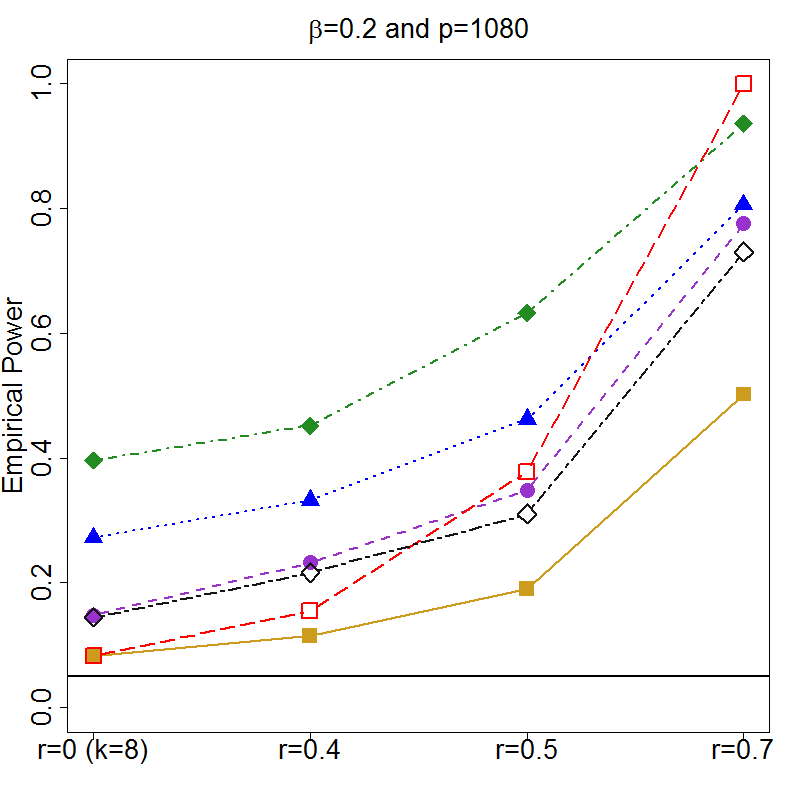}
&\includegraphics[width=1.75in,height=4.25cm]{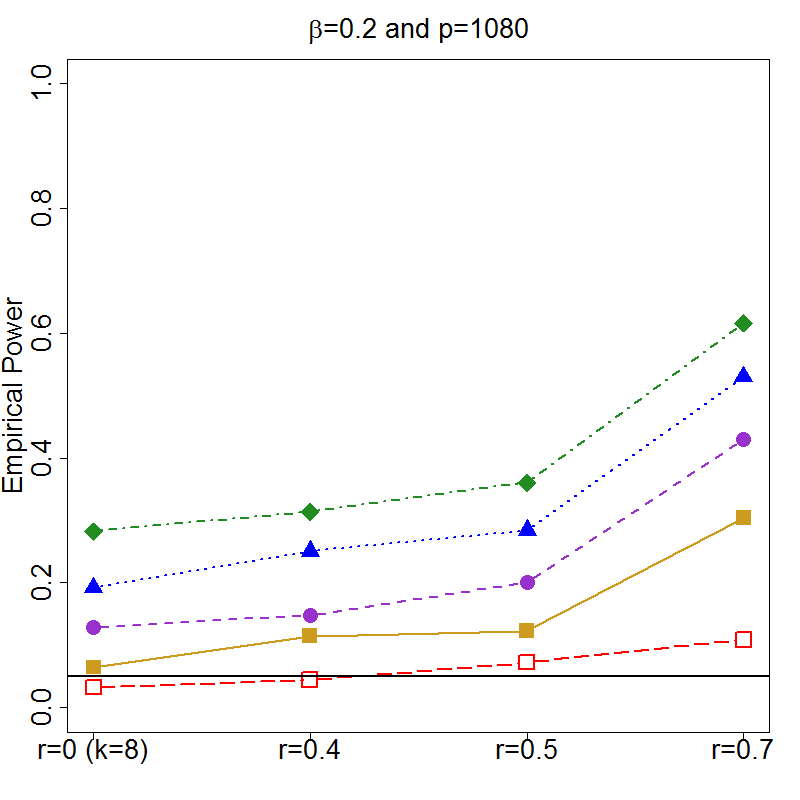}\\
\includegraphics[width=1.75in,height=4.25cm]{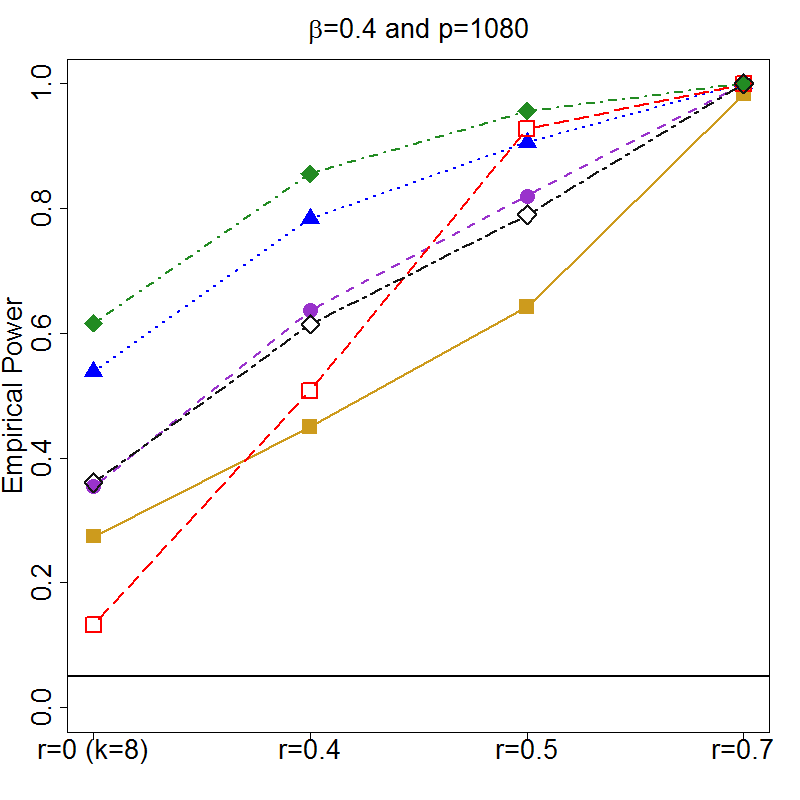}
&\includegraphics[width=1.75in,height=4.25cm]{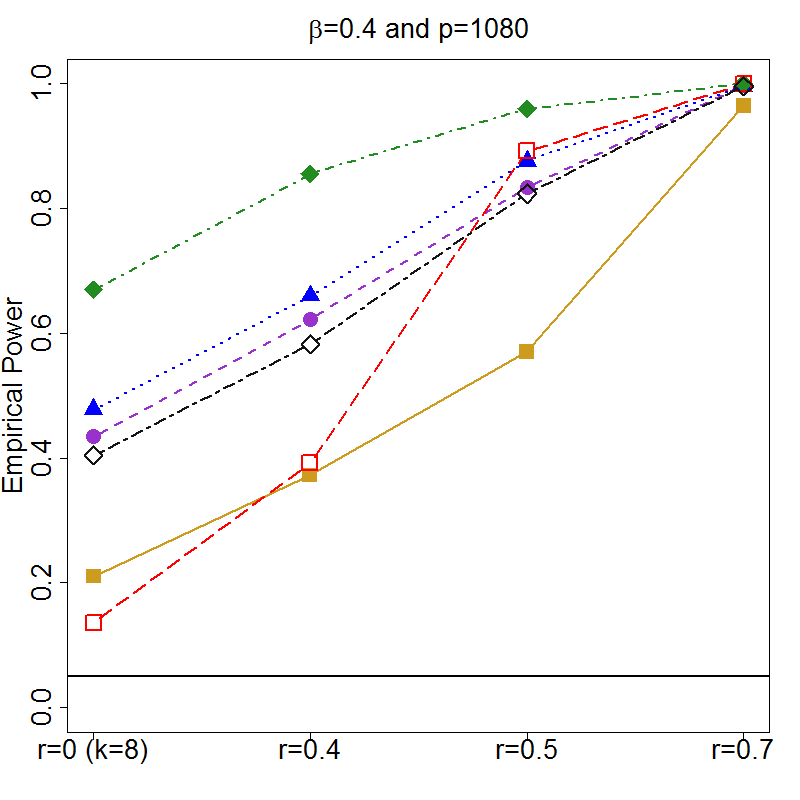}
&\includegraphics[width=1.75in,height=4.25cm]{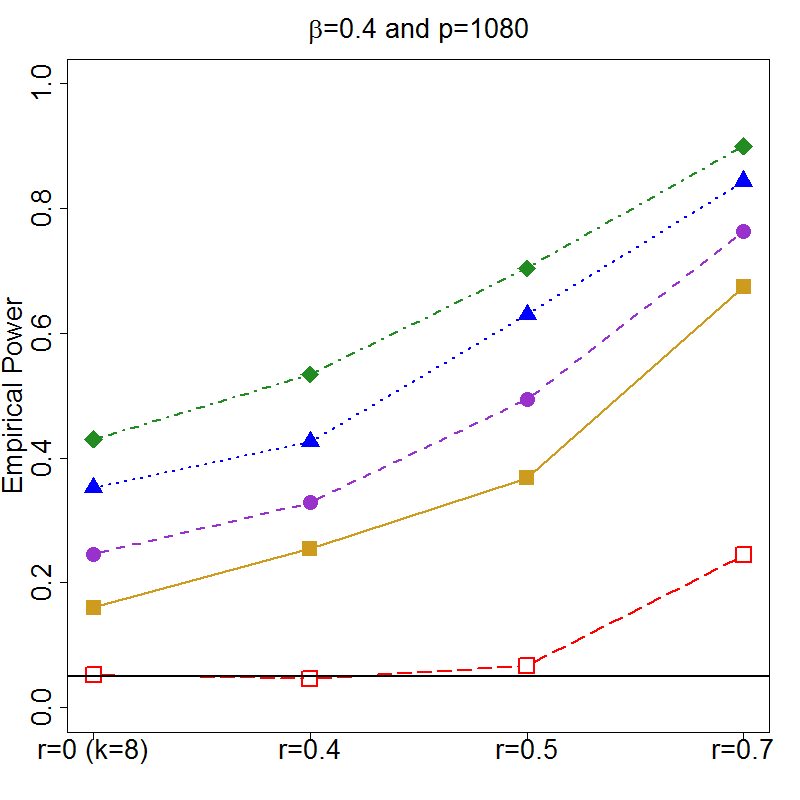}\\
\includegraphics[width=1.75in,height=4.25cm]{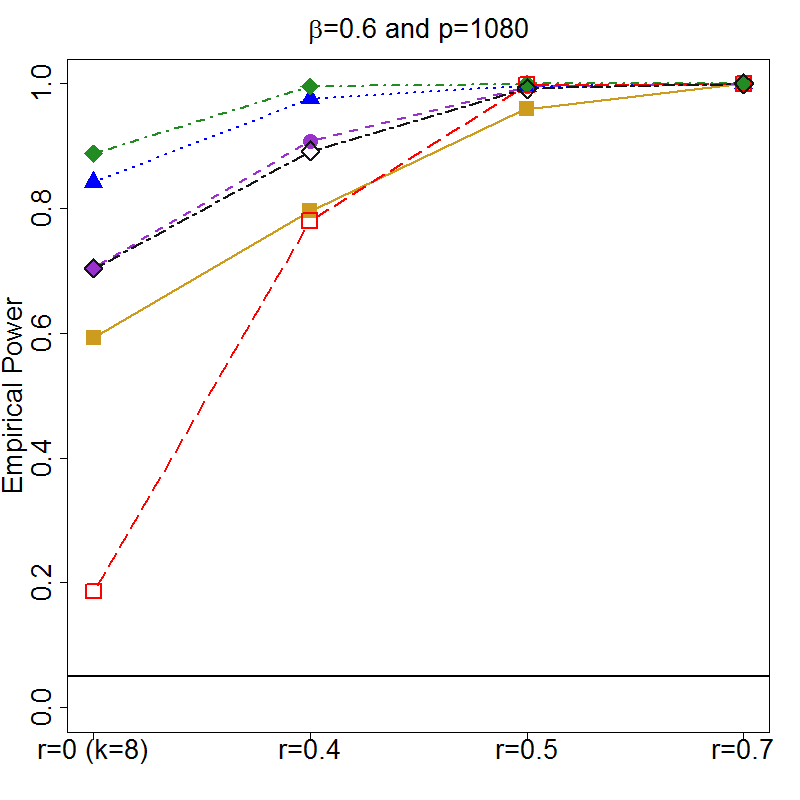}
&\includegraphics[width=1.75in,height=4.25cm]{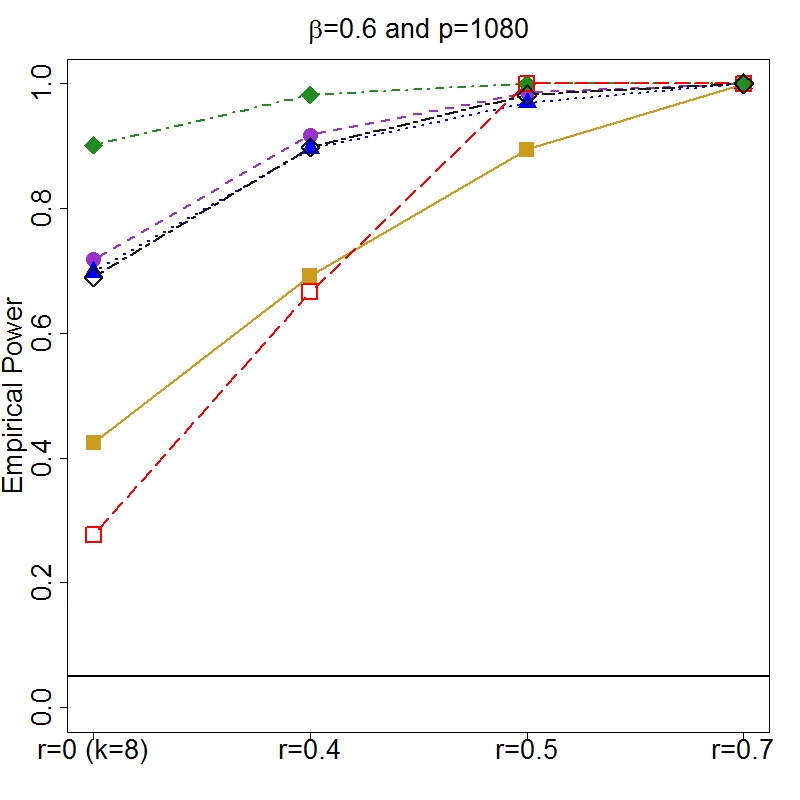}
&\includegraphics[width=1.75in,height=4.25cm]{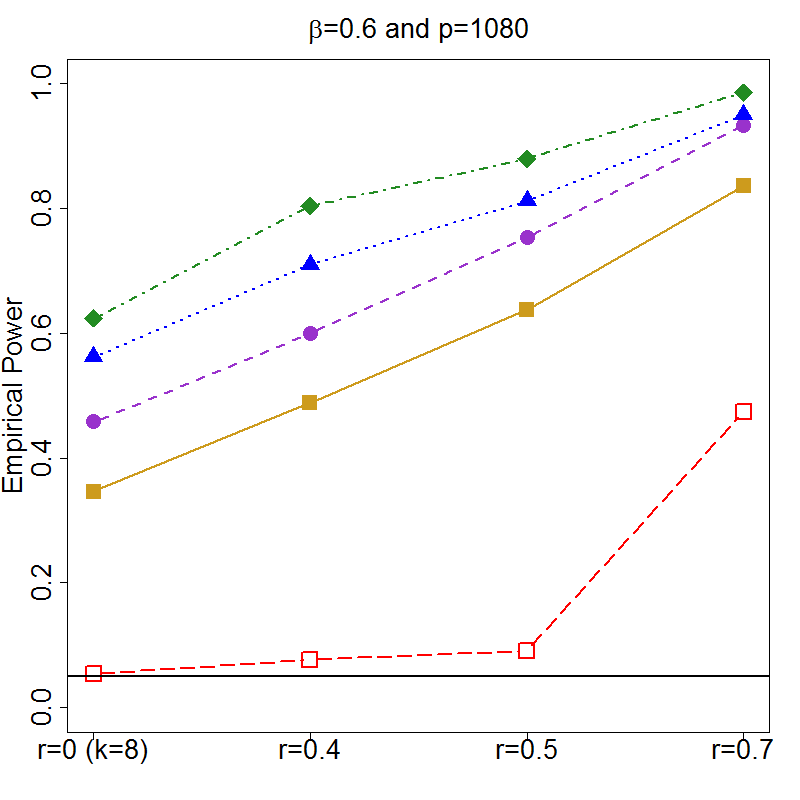}\\
(a) Model 1$^{\rm(II)}$&(b) Model 2$^{\rm (II)}$&(c) Model 3$^{\rm (II)}$

\end{tabular}
\caption{  Empirical powers of the proposed tests (non-studentized without screening $\Psi_{\ns,\alpha}$, studentized without screening $\Psi_{\rs,\alpha}$, non-studentized with screening $\Psi_{\ns,\alpha}^f$, and also studenzied with screening $\Psi_{\rs,\alpha}^f$) against alternatives with different levels of the signal strength ($\beta$) and sparsity ($1-r$) for the two-sample problem \eqref{eq:twosample} when $n=80$ and $p=1080$ at 5\% nominal significance for the Gaussian data and sparse covariance matrices in Model 1$^{\rm{(II)}}$ (column (a)), the Gaussian data and non-sparse covariance matrices in Model 2$^{\rm{(II)}}$ (column (b)), and the non-Gaussian data in Model 3$^{\rm{(II)}}$ (column (c)). The powers of the tests by \citet{ChenQin_2010} (CQ) and \citet{CaiLiuXia_2014} (CLX) are also displayed.}
   \label{f02}
\end{figure}

In summary, the numerical results show that the proposed tests, particularly the studentized tests  and the non-studentized test with screening, $\Psi_{\rs,\alpha}$, $\Psi_{\rs,\alpha}^f$ and  $\Psi_{\ns,\alpha}^f$, outperform the existing methods when the covariance structure is non-sparse and complex. The proposed tests are robust against both unknown covariance structures and Gaussianity. The $\Psi_{\ns,\alpha}^f$ maintains the nominal significance for small sample sizes and has good powers against sparse alternatives, which is recommended for practical applications with relatively small sample size. The $\Psi_{\textrm{s},\alpha}^f$ is more powerful and thus is preferable in applications with relatively large samples, such as biomedical research with a large cohort.

More extensive simulations were carried out for dimensions $p=120$ and $360$, from which the comparisons are consistent with the cases that are reported here. The empirical powers of all the tests also increase in $p$. All the additional simulation results are placed in the online supplementary materials. Furthermore, extra simulations were reported in the supplementary materials to demonstrate that the proposed procedures may benefit from using regularized covariance estimations when the covariance matrices do admit special structures.


\setcounter{equation}{0}

\section{Empirical study} \label{real data}


Analysis and interpretation based on gene-sets or GO terms derive more power than focusing on individual gene in extracting biological insights \citep{S05}. It has drawn increasing attentions to identify GO terms associated with biological states of interest 
\citep{S05,ET_2007,RNR_2008}. 
A particular GO term belongs to one of the three categories of gene ontologies of interest: biological processes (BP), cellular components (CC) and molecular functions (MF).

Statistically, identifying interesting gene-sets out of $G$ candidate gene-sets $\mathcal{S}_1,\ldots, \mathcal{S}_G$ based on independent samples from two biological states ($q=1,2$) is equivalent to test hypotheses
$H_{0s}: \bmu_{1,s}=\bmu_{2,s}$ versus $H_{1s}: \bmu_{1,s}\neq \bmu_{2,s}$ for $s=1,\ldots,G$, where $\bmu_{q,s}$ models the mean expression levels of $p_s$ genes in the gene-set $\mathcal{S}_s$ under biological state $q$. It is common that gene-sets overlap with each other as one particular gene may belong to several functional groups, and the size of a gene-set $p_s$ usually range from a small to a very large number. The selection of gene-sets therefore encounters both multiplicity and high dimensionality. Similar to \cite{ChenQin_2010}, we applied the proposed tests to each gene-set. With $p$-values obtained for all $G$ gene-sets, we further employed the multiple testing methods such as the Benjamini-Yekutieli (BY) procedure \citep{BY01} for controlling the false discovery rate (FDR) under dependeny to identify significant gene-sets. 

We applied the above procedure to a human acute lymphoblastic leukemia (ALL) dataset which is available at \url{http://www.ncbi.nlm.nih.gov}. The data contains gene expression levels from microarray experiments for patients suffering from ALL of either T-lymphocyte type or B-lymphocyte type leukemia. This dataset was originally analyzed by \cite{C04} to provide insight into the genetic mechanism on ALL development and it was also analyzed by \cite{DKV08} and \cite{ChenQin_2010} using different methodologies. To illustrate the proposed tests, we focus on the 75 patients of B-lymphocyte type leukemia, who were classified into two groups: 35 patients with BCR/ABL fusion and 40 patients with cytogenetically normal NEG, i.e., $n=35$ and $m=40$. We employed the approach in \cite{G2005} to conduct preliminary data processing. To focus on high dimensional scenarios, we also excluded gene-sets with $p_s\le 19$. It remained $G=1853,262$ and $284$ unique GO terms in the BP, CC and MF categories, respectively. And the largest gene-set contained $p_s=3050, 3145$ and $3040$ genes in the BP, CC and MF categories, respectively. Given the complexity of the data processing and collection procedures, batch effects may exist and result in unreliable results. Therefore, we further employ the surrogate variable analysis (SVA) method proposed by \cite{Leek_Storey_2007} to remove the potential batch effects and other unwanted variations in the data. In summary, two surrogate variables were found by SVA and removed from the original ALL expression data. Identifications of gene-sets associated to the BCR/ABL fusion display biological insights on the development of B-lymphocyte type leukemia and provide lists of functional groups for potential clinical treatments. We aim to identify gene-sets with significantly different expression levels between the BCR/ABL and NEG groups for each of the three categories.

The sample size of the ALL data is relatively small comparing to the maximum $p_s$, we therefore employed the proposed two-sample non-studentized tests $\Psi_{\ns,\alpha}$ and $\Psi_{\ns,\alpha}^f$ in the analysis as suggested by simulation studies in Section \ref{section:numerical}. Based on empirical $p$-values, we further employed the BY procedure for controlling the FDR at $0.015$ and identify significant gene-sets. For the proposed tests, we let $M=50000$ and used the sample covariance matrices to generate samples. Simulation studies in Section \ref{section:numerical} have shown that the test by \cite{CaiLiuXia_2014} may inflate type I error rate for small sample size, we therefore only consider the test by \cite{ChenQin_2010} (CQ) as a reference. For each category, the numbers of gene-sets being identified are summarized in Table \ref{t2}. All the gene-sets identified by the proposed two-step test $\Psi_{\ns,\alpha}^f$ are also identified by CQ
methods. This suggests that CQ test may over-detect some  disease-associated gene-sets. Moreover, $\Psi_{\ns,\alpha}^f$ found more disease associated gene-sets than $\Psi_{\ns,\alpha}$, which reflects the power improvement of the proposed two-step testing procedure as discussed before.
\begin{table}[h]
\centering \noindent{
\begin{tabular}{c|c|ccc|cccc}\toprule
GO     & \multirow{2}{*}{$\Psi_{\ns,\alpha}$} &  \multicolumn{3}{c|}{$\Psi_{\ns,\alpha}^f$ and CQ}   & \multirow{2}{*}{Total} &  \multirow{2}{*}{$\max_{s} p_s$} & \multirow{2}{*}{$\min_s p_s$} &
\multirow{2}{*}{$\lfloor \bar{p}_s\rfloor$} \\[.75ex]
  Category    &     & $\Psi_{\ns,\alpha}^f$ only & Both & CQ only  \\
 \midrule

BP &  601& 0 & 956 & 560& 1853 &  3050 & 20 & 150\\[0.5ex]
CC & 52 & 0 & 99 & 17   &  262 & 3145 &  19 & 280\\[0.5ex]
MF & 95 & 0 & 150 & 77  &  284 & 3040 & 19 & 157\\
\bottomrule
\end{tabular}}
\caption{Numbers of identified BCR/ABL associated gene-sets for each GO category using different tests in conjunction with the BY procedure by \citet{BY01} for controlling FDR at $0.015$. Columns labeled by the name of tests records the number of identified gene-sets by the corresponding testing procedures, where $\Psi_{\ns,\alpha}$ and $\Psi_{\ns,\alpha}^f$ are the proposed non-studentized tests without and with screening, and CQ stands for the test by \citet{ChenQin_2010}.}\label{t2}
\end{table}

By carefully investigating the gene-sets identified by both the proposed tests $\Psi_{\ns,\alpha}$ and $\Psi_{\ns,\alpha}^f$, we found that gene-sets GO:0005758 (mitochondrial intermembrane space) and GO:0004860 (protein kinase inhibitor activity) were identified as diseases-associated in the CC and MF categories. The functions of these two interesting gene-sets were recently studied and recognized associated with the development of ALL \citep{BK14,CW09}. Particularly, the protein kinase inhibition has been considered to be essential for the mechanism of T-lymphocyte type ALL \citep{CW09} and our finding suggests its connection with B-lymphocyte type ALL as well. The association of these gene-sets with the ALL may deserve further biological validations using the polymerase chain reaction.

\section{Supplementary Materials}
Web Appendices, which include proofs of the main theorems and additional numerical results referenced in Section \ref{section:theory} and \ref{section:numerical} are available with this paper at the Biometrics website on Wiley Online Library.
\vspace{-0.5cm}

\section*{Acknowledgement}
The authors thank the Co-Editor, the AE and two anonymous referees for
constructive comments and suggestions which have improved
the presentation of the article. Jinyuan Chang was supported in part by the Fundamental Research Funds for the Central Universities of China (Grant No. JBK150501), NSFC (Grant No. 11501462), and the Center of Statistical Research and the Joint Lab of Data Science and Business Intelligence at Southwestern University of Finance and Economics. Wen Zhou was supported in part by NSF
Grant IIS-1545994.

\vspace{-0.5cm}


\label{lastpage}
\end{document}